\documentclass{article}
\usepackage[utf8]{inputenc}
\usepackage{graphicx}
\usepackage{float}
\usepackage[english]{babel}
\usepackage{amsmath}
\usepackage{amsthm}
\usepackage{amsfonts}
\usepackage{hyperref}
\usepackage{geometry}
\usepackage[colorinlistoftodos]{todonotes}
\usepackage[capitalize]{cleveref}
\usepackage{tikz}
\usetikzlibrary{calc}


\newtheorem{theorem}{Theorem}[section]
\newtheorem{corollary}{Corollary}[theorem]
\newtheorem{lemma}[theorem]{Lemma}
\newtheorem{observation}[theorem]{Observation}

\newtheorem{proposition}{Proposition}[section]
\newtheorem{conjecture}{Conjecture}[section]
\newtheorem{Problem}{Problem}[section]
\newtheorem{question}{Question}[section]
\theoremstyle{definition}
\newtheorem{definition}{Definition}[section] 

\DeclareMathOperator{\Av}{Av}
\DeclareMathOperator{\VHC}{VHC}
\DeclareMathOperator{\RedVHC}{RedVHC}
\DeclareMathOperator{\Dyck}{Dyck}
\DeclareMathOperator{\Duck}{Duck}
\DeclareMathOperator{\vertc}{vert}
\DeclareMathOperator{\hor}{hor}
\DeclareMathOperator{\Hooks}{Hooks}

\title{312-Avoiding Reduced Valid Hook Configurations and Duck Words}
\author{Ilani Axelrod-Freed}
\date{October 2020}

\begin{document}

\maketitle
\begin{abstract}
    Valid hook configurations are combinatorial objects used to understand West's stack sorting map as well as cumulants in noncommutative probability theory. We show a bijection between reduced valid hook configurations on 312-avoiding permutations with the maximal allowed number of points and 3D-Dyck words, proving a conjecture of Sankar's. We extend to a bijection between all 312-avoiding reduced valid hook configurations and 3D-Dyck words with specified modifications. We show how these can be counted in terms of the number of 3D-Dyck words of length $3k$ in which exactly $i$ Y's do not have an X immediately before them, the $(k,i)$-Duck words, and use this relationship to prove several properties about sums of 312-avoiding reduced valid hook configurations, including two more of Sankar's conjectures. We also show that the number of $(k,1)$-Duck words is given by a variant of the  tennis ball numbers.
\end{abstract}

\section{Introduction}
\subsection{Motivation}

Defant \cite{postorder, preimages} introduced valid hook configurations to count the cardinality of the preimage of permutations under the stack sorting map, called its fertility. Defant has used his Fertility Formula to prove many results about the stack sorting map. 

Following this, Defant, Engen, and Miller established an unexpected bijection between valid hook configurations and certain weighted set partitions that appear in free probability theory (also called noncommutative probability theory) \cite{stacksorting}. In \cite{troupes}, Defant formalized the connection between valid hook configurations and noncommutative probability, giving a simple combinatorial formula that converts from free cumulants to classical cumulants. The formula is given by a sum over valid hook configurations. Valid hook configurations serve as a bridge that allows one to use tools from free probability theory in order to prove deep facts about the stack sorting map, as well as the other way around. Defant also used valid hook configurations and a generalization of stack sorting to prove a surprising combinatorial result about free cumulants and classical cumulants involving colored binary plane trees \cite{troupes}. They can also be used to generalize many of the results known about stack sorting to a much more general context involving special families of colored binary plane trees called troupes. 

In \cite{troupes} Defant gives a formula to convert free cumulants to classical cumulants which is given by summing over 231-avoiding valid hook configurations. Defant's Fertility Formula also tells us that 231-avoiding valid hook configurations are closely related to 2-stack-sortable permutations, which have been studied extensively. This motivated the study of 231-avoiding valid hook configurations and from there, further pattern-avoiding valid hook configurations. In particular, 312-avoiding valid hook configurations are in bijection with intervals in partial orders on Motzkin paths. Sankar \cite{maya} proved that the 312-avoiding valid hook configurations are also in bijection with certain closed lattice walks and used this to show that their generating function is not D-finite. She shows that there is also a formula for the number of 312-avoiding valid hook configurations in terms of 312-avoiding reduced valid hook configurations, counted according to the number of hooks and the number of points. We will prove properties about the number of 312-avoiding reduced valid hook configurations, settling several conjectures of Sankar's.

\subsection{Sankar's Conjectures}


 Let $\RedVHC_k(\Av_{3k-i}(312))$ denote the set of $312$-avoiding reduced valid hook configurations with $k$ hooks that are on permutations with $2k+i$ points. We will define these notions in Section 2. For now,
 we simply want to consider the following triangle of numbers, where the entry in the $k^{th}$ row and $i^{th}$ column is $|\RedVHC_k(\Av_{2k+i}(312))|$:

$\begin{array}{ccccccc}
1 & \ & \ & \ & \  & \ & \ \\
3 &   5 & \ & \ & \  & \ & \ \\
14  &  51  &   42  & \  & \  & \ & \ \\
84  &  485 &     849   &    462  & \  & \ & \ \\
594  & 4743   &  13004 &  14819   &   6006  & \ & \ \\
4719  & 48309   &   182311   &  322789   &   271452 &   87516 & \ \\
40898 & 511607 & 2472322 & 5999489 & 7794646 & 5182011 &
 138567 .
\end{array}$

\medskip

\begin{definition}
Let
$$f_k(x)=\sum_{i=0}^{k-1}|\RedVHC_k(\Av_{3k-i}(312))| \cdot x^i$$
$$\text{and } h_k(x) = f_k(x-1).$$
\end{definition}

There are many patterns observable in the triangle of numbers related to the coefficients of these polynomials. Sankar has conjectures in \cite{maya} about these numbers:

\begin{conjecture}
For all $k \geq 1$, $f_k(0) = h_k(1) = |\RedVHC_k(\Av_{3k}(312))| = 2\frac{(3k)!}{k!(k+1)!(k+2)!}$, the $k^{th}$ three dimensional Catalan number.
\label{conj: Sankar 1}
\end{conjecture}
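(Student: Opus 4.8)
The three quantities in the statement collapse to a single enumeration. By definition $f_k(0)$ is the constant coefficient of $f_k$, namely the $i=0$ summand $|\RedVHC_k(\Av_{3k}(312))|$, and $h_k(1)=f_k(1-1)=f_k(0)$; both of these equalities are purely formal. The only substantive claim is
$$|\RedVHC_k(\Av_{3k}(312))| = \frac{2(3k)!}{k!(k+1)!(k+2)!}.$$
Since the right-hand side is the number of standard Young tableaux of the rectangular shape with three rows and $k$ columns --- equivalently the number of \emph{3D-Dyck words} of length $3k$, i.e. words on $\{X,Y,Z\}$ using each letter $k$ times in which every prefix satisfies $\#X \ge \#Y \ge \#Z$ --- it suffices to construct a bijection between $\RedVHC_k(\Av_{3k}(312))$ and this set of words. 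I would treat the count of 3D-Dyck words as known, via the hook-length formula (or the Lindstr\"om--Gessel--Viennot determinant for three non-crossing lattice paths).

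First I would analyze the extremal structure of a configuration in $\RedVHC_k(\Av_{3k}(312))$: this is precisely the case in which the number of points is as large as the $k$ hooks allow. I expect a counting/extremal argument to show that maximality rigidly forces the plot to decompose into $k$ blocks, each carrying three points that play three distinct roles --- a southwest endpoint (descent top), a northeast endpoint, and one further distinguished point --- so that there are exactly $k$ points of each role and $3k$ in all. Traversing the $3k$ points in left-to-right order and recording $X$, $Y$, or $Z$ according to the role of the current point then produces a word of length $3k$ with exactly $k$ of each letter.

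Next I would show that this word is always a 3D-Dyck word and that the assignment is a bijection. The defining axioms of a valid hook configuration (each southwest endpoint is a descent top, hooks do not cross, and no point lies in the forbidden region cut out by a hook), together with $312$-avoidance, should translate role-by-role into the ballot inequalities: a point can occupy the second or third role only after enough points of the preceding role have appeared to its left to support it, which is exactly the condition $\#X \ge \#Y \ge \#Z$ on every prefix. For the inverse map I would reconstruct, from an arbitrary 3D-Dyck word, the unique placement of points and hooks realizing the prescribed sequence of roles, and verify that the result is valid, reduced, $312$-avoiding, and of maximal point count; injectivity is immediate from the reconstruction, so the two maps are mutually inverse.

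The main obstacle is the pair of structural claims that frame the letter-counting, not the bookkeeping itself. First is the extremal characterization --- that the maximal point count is $3k$ and that equality forces the three-role block decomposition --- which requires bounding how many points a valid $312$-avoiding configuration can carry per hook. Second, and I expect more delicate, is the surjectivity (realizability) step: that every 3D-Dyck word reconstructs to a configuration satisfying all validity axioms simultaneously, including the non-crossing and forbidden-region conditions for the hooks dictated by an arbitrary ballot word. I would organize this as an induction on $k$, peeling off the last block (equivalently a final segment of the word) so that each validity condition can be verified and maintained inductively.
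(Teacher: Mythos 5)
Your overall strategy is the paper's: reduce the conjecture to a bijection between $\RedVHC_k(\Av_{3k}(312))$ and 3D-Dyck words, using the fact that maximality of the point count forces each of the $3k$ points to play exactly one of the three roles (descent bottom, SW endpoint/descent top, NE endpoint), $k$ points per role. The gap is in the central step: reading the points in \emph{left-to-right} order cannot work, under any assignment of letters to roles. In a reduced configuration every descent bottom sits immediately to the right of its descent top, so the left-to-right role sequence is determined by the hook skeleton alone; it forgets the \emph{heights} of the descent bottoms, which genuinely vary, and so the reading is not injective (and its image is only a Catalan-sized family of words, e.g.\ with SW$\mapsto$X, bottom$\mapsto$Y every X is immediately followed by a Y; with SW$\mapsto$Y the word starts with a Y and is not even a ballot word). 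The paper's map $\phi$ instead reads the points in order of increasing \emph{height}: X for descent bottoms, Y for descent tops, Z for NE endpoints. Then $\#X\ge\#Y$ on every prefix because below the $i$-th descent top there must lie at least $i$ descent bottoms, and $\#Y\ge\#Z$ because all hook endpoints are left-to-right maxima (\cref{lemma:rising endpts}), so in height order the endpoints trace out the Dyck word of the nested hooks. The choice of traversal order is exactly what encodes the descent-bottom heights and makes the count jump from $C_k$ to the 3D-Catalan number.

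Your inverse map also needs a specific rigidity input that the sketch omits, and the proposed induction "peeling off the last block" is not how the paper closes it (hooks nest across any such blocks, so the peeling is awkward). The paper uses \cref{lemma:BH_C heights}: in a $312$-avoiding permutation, the $i$-th descent bottom is forced to be the largest not-yet-used element of the set of descent-bottom heights lying below the $i$-th descent top. Consequently, given a 3D-Dyck word, the positions of the X's determine the set of descent-bottom heights, the YZ-subword determines the hook matching, the horizontal coordinates of all points are then forced (each descent bottom immediately follows its descent top), and the lemma pins down which descent bottom receives which height. Uniqueness and surjectivity both follow from this determinacy in one pass; no induction is required. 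You are right that the extremal bookkeeping ($3k$ is the maximum and forces the three disjoint roles) must be recorded, but that part is short: $k$ hooks give $k$ SW endpoints, each a descent top needing its own descent bottom, plus $k$ NE endpoints, and in a reduced configuration there are no other points.
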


\begin{conjecture}
For every $k \geq 1$, $f_k(-1) = h_k(0) = C_k$.
\label{conj: Sankar 2}
\end{conjecture}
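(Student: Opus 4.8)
The plan is to route everything through the polynomial $h_k(x)=f_k(x-1)$, which I expect to carry a clean combinatorial meaning: it should be the enumerator of 3D-Dyck words by their number of \emph{free} $Y$'s (those not immediately preceded by an $X$). Write $d_{k,j}$ for the number of $(k,j)$-Duck words, i.e.\ 3D-Dyck words of length $3k$ with exactly $j$ free $Y$'s. The first step is to establish
$$|\RedVHC_k(\Av_{3k-i}(312))|=\sum_{j\ge 0}\binom{j}{i}\,d_{k,j}.$$
This is where I would lean on the extension bijection: a $312$-avoiding reduced valid hook configuration on $3k-i$ points should correspond to a 3D-Dyck word together with a choice of which $i$ of its free $Y$'s have been ``modified'' (each modification deleting one point). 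The content of the argument is to verify that the $i$ modified positions may be taken to be \emph{any} $i$-subset of the $j$ free $Y$'s, chosen independently, which is exactly what produces the binomial factor $\binom{j}{i}$.

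Granting that identity, summing against $x^i$ gives $f_k(x)=\sum_j d_{k,j}(1+x)^j$, and hence
$$h_k(x)=f_k(x-1)=\sum_{j\ge 0}d_{k,j}\,x^j.$$
So $h_k$ is literally the generating function of 3D-Dyck words by number of free $Y$'s, and the substitution $x\mapsto x-1$ is engineered precisely to make the $(1+x)^j$ factors collapse to monomials. As a consistency check this also recovers \cref{conj: Sankar 1}, since $h_k(1)=\sum_j d_{k,j}$ is the total number of 3D-Dyck words, the $k$-th three dimensional Catalan number. Reading off the constant term gives $f_k(-1)=h_k(0)=d_{k,0}$: every term with $j\ge 1$ is annihilated, leaving only the count of 3D-Dyck words in which every $Y$ is immediately preceded by an $X$.

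It then remains to prove $d_{k,0}=C_k$, which I would do directly on words. In a $(k,0)$-Duck word each of the $k$ copies of $Y$ is immediately preceded by a distinct $X$; since there are only $k$ copies of $X$, the map sending each $Y$ to its preceding $X$ is a bijection, so every $X$ is immediately followed by its $Y$. Thus the word is a shuffle of $k$ blocks $XY$ with $k$ single letters $Z$. Contracting each block $XY$ to an up-step and each $Z$ to a down-step, the ballot conditions $\#X\ge\#Y\ge\#Z$ on every prefix simplify: $\#X\ge\#Y$ becomes automatic (at any prefix $\#X-\#Y\in\{0,1\}$), while $\#Y\ge\#Z$ becomes exactly the requirement that the resulting path never dip below the axis (the only delicate point, a prefix ending between an $X$ and its $Y$, is implied by the inequality one step earlier). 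This gives a bijection between $(k,0)$-Duck words and Dyck paths of semilength $k$, so $d_{k,0}=C_k$ and the conjecture follows.

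I expect the main obstacle to be the first step, namely proving $|\RedVHC_k(\Av_{3k-i}(312))|=\sum_j\binom{j}{i}d_{k,j}$ --- in particular showing that the point-deleting modifications attach precisely to the free $Y$'s and can be applied to any subset of them independently, so that the binomial coefficient, and hence the decisive $(1+x)^j$, actually appears. Once that structural identity is secured, the evaluation at $x=-1$ is immediate and the Catalan bijection for $d_{k,0}$ is short.
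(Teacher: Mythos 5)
Your proposal is correct and follows essentially the same route as the paper: the structural identity $|\RedVHC_k(\Av_{3k-i}(312))|=\sum_j\binom{j}{i}\Duck_{k,j}$ is the paper's Theorem \ref{thrm:underline} (via the underlined-duck-word bijection of Theorem \ref{thrm:duck words bij}), your identity $h_k(x)=\sum_j \Duck_{k,j}x^j$ is the paper's Corollary \ref{corollary:polynomail}, and your contraction of $XY$ blocks to up-steps is exactly the paper's proof that $\Duck_{k,0}=C_k$. The only cosmetic difference is that the paper evaluates the alternating sum directly via $\sum_{i=0}^j(-1)^i\binom{j}{i}=0$ for $j\geq 1$ rather than reading off the constant term of $h_k$, which amounts to the same computation.
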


\begin{conjecture}
The polynomial $h_k(x)$ has strictly positive coefficients.
\label{conj: Sankar 3}
\end{conjecture}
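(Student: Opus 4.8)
The plan is to read off the coefficients of $h_k$ directly as counts of Duck words, so that strict positivity becomes the statement that none of these counts vanishes. Write $a_i=|\RedVHC_k(\Av_{3k-i}(312))|$ for the coefficients of $f_k$, and let $d_{k,j}$ denote the number of $(k,j)$-Duck words, i.e.\ $3$D-Dyck words of length $3k$ having exactly $j$ of their $Y$'s not immediately preceded by an $X$. The enumerative relationship between $312$-avoiding reduced valid hook configurations and $3$D-Dyck words (the bijection on the maximal configurations together with its extension by modifications) yields, after grouping the $3$D-Dyck words by their statistic $j$, the identity
\[
a_i \;=\; \sum_{j=0}^{k-1} \binom{j}{i}\, d_{k,j},
\]
since a word with $j$ Y's not preceded by an $X$ contributes $\binom{j}{i}$ configurations on permutations with $3k-i$ points. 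Summing against $x^i$ gives $f_k(x)=\sum_{j} d_{k,j}(1+x)^j$, and hence
\[
h_k(x)=f_k(x-1)=\sum_{j=0}^{k-1} d_{k,j}\,x^j,
\]
so that $[x^j]h_k(x)=d_{k,j}$ for every $j$. This is the crux: the substitution $x\mapsto x-1$ is precisely the inclusion--exclusion that passes from the ``marked'' count $a_i$ to the ``exact'' count $d_{k,j}$.

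Once this identity is in hand, nonnegativity of the coefficients is automatic, as each $d_{k,j}$ is a cardinality. It then remains to prove the strict inequalities $d_{k,j}\ge 1$ for all $0\le j\le k-1$. This index range is forced: in any $3$D-Dyck word the first $Y$ is always immediately preceded by an $X$, since before it the ballot condition permits no $Z$ and no earlier $Y$, so $j\le k-1$, matching $\deg h_k=k-1$.

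For the nonemptiness I would exhibit an explicit word for each $j$. Consider
\[
w_{k,j}=X^{j+1}\,Y^{j+1}\,(XY)^{k-j-1}\,Z^{k},
\]
which is defined precisely for $0\le j\le k-1$. Its content is $X^kY^kZ^k$, and the ballot condition $\#X\ge\#Y\ge\#Z$ holds at every prefix: the opening block $X^{j+1}Y^{j+1}$ keeps $\#X\ge\#Y$ with $\#Z=0$, each subsequent $XY$ preserves $\#X\ge\#Y$, and the trailing $Z^{k}$ satisfies $\#Z\le\#Y=k$. Among its $Y$'s, the first $Y$ of the block $Y^{j+1}$ is preceded by an $X$, the remaining $j$ are preceded by a $Y$, and every $Y$ inside $(XY)^{k-j-1}$ is preceded by an $X$; hence $w_{k,j}$ has exactly $j$ Y's not preceded by an $X$ and is a genuine $(k,j)$-Duck word. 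Therefore $d_{k,j}\ge 1$ for every such $j$, and combined with the identity above, all coefficients of $h_k$ are strictly positive.

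The main obstacle is the first step, establishing $[x^j]h_k=d_{k,j}$ rigorously: one must show that the modification operation on $3$D-Dyck words is governed exactly by the choice of a subset of the $Y$'s that are \emph{not} preceded by an $X$, so that the binomial coefficients $\binom{j}{i}$ appear with the correct statistic. Given the bijection and the modification framework, verifying that this statistic is precisely ``number of $Y$'s not preceded by an $X$'' is the delicate point; by contrast, the final construction is routine, and its $j=0$ case $w_{k,0}=(XY)^k Z^k$ also illustrates the $XY$-block/Dyck-path correspondence underlying $h_k(0)=d_{k,0}=C_k$.
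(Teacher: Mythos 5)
Your proposal is correct and follows essentially the same route as the paper: it derives the identity $[x^j]h_k(x)=\Duck_{k,j}$ from the bijection with underlined duck words and the binomial relation $|\RedVHC_k(\Av_{3k-i}(312))|=\sum_j\binom{j}{i}\Duck_{k,j}$, exactly as in \cref{thrm:underline} and \cref{corollary:polynomail}. The one genuine addition is your explicit witness $w_{k,j}=X^{j+1}Y^{j+1}(XY)^{k-j-1}Z^{k}$ certifying $\Duck_{k,j}\ge 1$ for all $0\le j\le k-1$; the paper concludes strict positivity directly from the combinatorial interpretation without recording this (easy but necessary) nonemptiness check, so your construction closes a small gap the paper leaves implicit.
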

    
\begin{conjecture}
The linear coefficient of $h_k(x)$ is given by the $(k-1)^{st}$ weighted tennis ball number.
\label{conj: Sankar 4}
\end{conjecture}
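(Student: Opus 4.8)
The plan is to reduce \Cref{conj: Sankar 4} to a statement purely about $(k,1)$-Duck words and then identify those with the weighted tennis ball numbers. First I would invoke the counting relationship established earlier. Writing $D_{k,j}$ for the number of $(k,j)$-Duck words, the binomial correspondence between $312$-avoiding reduced valid hook configurations and Duck words gives $|\RedVHC_k(\Av_{3k-i}(312))|=\sum_{j\ge i}\binom{j}{i}D_{k,j}$, that is $f_k(x)=\sum_{j}D_{k,j}(1+x)^j$. Substituting $x\mapsto x-1$ turns this into $h_k(x)=\sum_{j}D_{k,j}\,x^{j}$, so the linear coefficient of $h_k(x)$ is exactly $D_{k,1}$. (As a consistency check this recovers \Cref{conj: Sankar 2}: $h_k(0)=D_{k,0}$ is the number of $3$D-Dyck words in which every Y is glued to a preceding X, and contracting each ``XY'' block turns such a word into a Dyck path of semilength $k$, so $D_{k,0}=C_k$.) Thus it suffices to prove $D_{k,1}=w_{k-1}$, the $(k-1)$st weighted tennis ball number.

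Next I would enumerate the $(k,1)$-Duck words by isolating their single free Y. In such a word the other $k-1$ Y's are each glued to the X immediately before them, leaving exactly one lone X (not followed by a Y), one free Y, and the $k$ Z's. Re-gluing the free Y to the lone X produces a $(k,0)$-Duck word, which contracts to a Dyck path of semilength $k$ and hence, under the standard correspondence, to a lawn configuration $C=\{b_1<\cdots<b_{k-1}\}$ with $b_i\le 2i$ of the $(k-1)$-round $(2,1)$-tennis-ball game. I would then show that the fiber of this re-gluing map over a fixed $(k,0)$-word — the set of slots the free Y may occupy subject to all prefix inequalities $\#X\ge\#Y\ge\#Z$ and to remaining the unique free Y — has total size over the fibers equal to the label sum $b_1+\cdots+b_{k-1}$ of the associated configuration. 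Summing over the $C_k$ base words then yields $D_{k,1}=\sum_{C}\sum_i b_i=w_{k-1}$, exactly the $(k-1)$st weighted tennis ball number.

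The main obstacle is this fiber count. Translating the nested conditions $\#X\ge\#Y\ge\#Z$ through the block-contraction into the ball constraints $b_i\le 2i$ is routine, but proving that the number of admissible free-Y positions is exactly the label sum, rather than off by a lower-order correction, is delicate: the admissible window for the free Y is cut out simultaneously by the Y–Z condition above it and the X–Y condition below it, and one must check that these two constraints combine to give precisely $\sum_i b_i$ slots with no over- or under-counting. I would first confirm the correspondence on the base words of size $k=2,3$, where the label sums are $\{1,2\}$ and $\{3,4,5,5,6\}$, matching $D_{2,1}=3$ and $D_{3,1}=23$.

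For the general argument I see two routes. The clean route is to reformulate the free-Y window as ``choose a ball $b_i$ and one of $b_i$ sub-positions,'' which would make a genuine bijection between $(k,1)$-Duck words and label-weighted lawn configurations, and hence the identity $D_{k,1}=w_{k-1}$, transparent. The robust fallback is to peel off the last Z (equivalently, to decompose at the first return of the contracted Dyck path) to obtain a recurrence for $D_{k,1}$, and then to verify that the weighted tennis ball numbers satisfy the same recurrence with the same initial conditions. I expect the recurrence route to go through with less combinatorial bookkeeping, while the bijective route, if it can be pushed through, is the more illuminating and is what I would aim for first.
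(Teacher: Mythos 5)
Your overall strategy is the same as the paper's: use the identity $h_k(x)=\sum_j \Duck_{k,j}x^j$ (the paper's \cref{corollary:polynomail}, obtained from $f_k(x)=\sum_j\Duck_{k,j}(1+x)^j$ exactly as you describe) to reduce the conjecture to $\Duck_{k,1}=tb_{k-1}$, and then prove that identity by fibering the $(k,1)$-Duck words over Dyck words of semilength $k$ and matching fiber sizes with the label sums of lawn configurations under the Grimaldi--Moser bijection. Your characterization of lawn configurations as $\{b_1<\cdots<b_{k-1}\}$ with $b_i\le 2i$ and your checks for $k=2,3$ are all consistent with the paper.

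The step you defer --- that the fiber over a fixed base word has size exactly $b_1+\cdots+b_{k-1}$ --- is where the content lies, and your instinct that it is delicate is correct: if you parametrize the fiber by ``choose which Y to free and where to reinsert its X,'' you overcount, since sliding the lone X within a maximal run of consecutive X's yields the same word (for $k=2$, the base word XYXYZZ has fiber $\{\mathrm{XXYYZZ}\}$ of size $1$, though there are two insertion gaps left of the first Y). The paper's \cref{proposion:tb_k} closes this along your ``clean route'' using the rewritten duck words of \cref{section:rewritten}: a $(k,1)$-Duck word is encoded as a Dyck word with one underlined U (the free Y) and one circle on a letter strictly before it, where the circle sits on the first non-X letter after the lone X --- precisely the normalization that kills the overcount. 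The fiber over a Dyck word is then the set of pairs (an underlinable U, a letter before it), of size $\sum_{i=1}^{k-1}B(U_{i+1})$, and since the Grimaldi--Moser map places the $(i+1)$st U in position $j_i+1$ when the $i$th lawn ball is $j_i$, this sum is the label sum. (The first U can never be freed, which is why the sum starts at $i=1$.) So to complete your argument you must prove that the admissible positions for the lone X, up to the run-of-X's equivalence, biject with the letters preceding the freed U; your recurrence fallback is not what the paper does and would additionally require a recurrence for $tb_k$.
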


In Section 3 we establish some basic properties of 312-avoiding valid hook configurations. In Section 4 we define 3D-Dyck words and prove \cref{conj: Sankar 1} by showing these words are in bijection with 312-avoiding reduced valid hook configurations with the maximal possible number of points. In Section 5 we generalize this bijection to all 312-avoiding reduced valid hook configurations and 3D-Dyck words in which a certain number of Y's that do not have X's immediately before them are underlined, called underlined duck words. In Section 6 we give a formula for the number of these words in terms of the number of duck words -- 3D-Dyck words in which exactly $i$ Y's do not have an X immediately before them. This provides a way to express all 312-avoiding valid hook configuration and thus intervals on certain posets defined on Motzkin paths in terms of the more intuitively-defined duck words. We use this to prove Conjectures \ref{conj: Sankar 2} and \ref{conj: Sankar 3} as well as further properties. In particular we show that the coefficients of $h_k(x)$ are given by the duck words, a far more elegant combinatorial interpretation for $f_k(x)$ than what we started with. In Section 7 we count the number of 3D-Dyck words where exactly one Y does not have an X immediately before it and show that this proves \cref{conj: Sankar 4}.

\section{Preliminaries}
\subsection{Definitions}
For the following definitions, let $S_n$ be the set of permutations of length $n$, and $\pi = \pi_1 \cdots \pi_n \in S_n$ be a permutation.

\begin{definition}
The permutation $\pi$ is called ${\sigma}$\textit{-avoiding} if it does not contain a subsequence that is order isomorphic to the permutation $\sigma$. In other words, for $\sigma \in S_k$, $\pi$ is $\sigma$-avoiding if there is no length $k$ subsequence of $\pi$, written with increasing indices as $\pi_{a_1} \cdots \pi_{a_k}$, such that $\pi_{a_i}>\pi_{a_j}$ exactly when $\sigma_i>\sigma_j$.
\end{definition}

We let $\Av_n(\sigma)$ be the set of permutations in $S_n$ that avoid $\sigma$.

\begin{definition}
The \textit{plot} of the permutation $\pi$ is the set of points $\{(i, \pi_i): i \in [n] \} \subseteq \mathbb{R}^2$. 
\end{definition}

\begin{definition}
A descent occurs when $\pi_i>\pi_{i+1}$. Then $i$ is the descent and we call $(i,\pi_i)$ the \textit{descent top} and $(i+1, \pi_{i+1})$ the \textit{descent bottom}.
\end{definition}

We construct a hook on $\pi$ by drawing a vertical line up from a point $(i, \pi_i)$, and then a horizontal line to another point $(j, \pi_j)$ where $i<j$ and $\pi_i < \pi_j$ as illustrated in \cref{figure: a hook}.

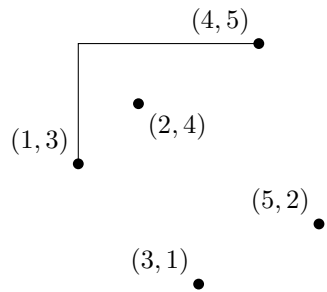
\begin{figure}[h]
\begin{center}
\begin{tikzpicture}[scale=.8]
\fill (1, 3) circle (.9mm) node[above left] {$(1,3)$} (2, 4) circle (.9mm) node[below right] {$(2,4)$} (3, 1) circle (.9mm) node[above left] {$(3,1)$} (4, 5) circle (.9mm) node[above left] {$(4,5)$} (5,2) circle (.9mm) node[above left] {$(5,2)$};
\draw (1,3) --(1,5) -- (4,5);
\end{tikzpicture}
\caption{The plot of $\pi=34152$ with a hook from $(1,3)$ to $(4,5)$.}
\label{figure: a hook}
\end{center}  
\end{figure}

\begin{definition}
For a hook with points as above, the point $(i, \pi_i)$ is the \textit{southwest (SW) endpoint} and $(j,\pi_j)$ is the \textit{northeast (NE) endpoint}.
\end{definition}

\begin{definition}
A \textit{valid hook configuration} on a permutation $\pi$ is a set of hooks such that
\begin{enumerate}
  \item[(i)] The set of southwest hook endpoints is exactly the set of descent tops.

\item[(ii)] No point in the plot of $\pi$ lies above a hook.

\item[(iii)] Hooks do not intersect, except at endpoints. \\
See Figures \ref{fig:valid HC} and \ref{fig:invalid HC} for examples of valid and invalid hook configurations respectively.
\end{enumerate}

For $S \subset S_n$, we let $\VHC(S)$ be the set of valid hook configurations on permutations in $S$, and $\VHC_k(S)$ be the subset of those valid hook configurations that have exactly $k$ hooks.
\label{Def:VHC}
\end{definition}

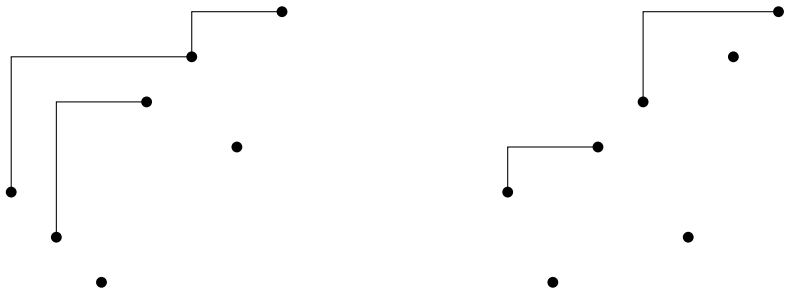
\begin{figure}[H]\begin{center}
\begin{tikzpicture}[scale=.6]
	\coordinate (P1) at (1,3);
	\coordinate (P2) at (2,2);
	\coordinate (P3) at (3,1);
	\coordinate (P4) at (4,5);
	\coordinate (P5) at (5,6);
	\coordinate (P6) at (6,4);
	\coordinate (P7) at (7,7);
	\foreach \pt in {P1, P2, P3, P4, P5, P6, P7}
		\fill (\pt) circle (1.2mm);
	\foreach \a/\b in {P1/P5, P2/P4, P5/P7}
		\draw (\a) -- (\a)|-(\b) -- (\b);
	\coordinate (Q1) at (12,3);
	\coordinate (Q2) at (13,1);
	\coordinate (Q3) at (14,4);
	\coordinate (Q4) at (15,5);
	\coordinate (Q5) at (16,2);
	\coordinate (Q6) at (17,6);
	\coordinate (Q7) at (18,7);
	\foreach \pt in {Q1, Q2, Q3, Q4, Q5, Q6, Q7}
		\fill (\pt) circle (1.2mm);
	\foreach \a/\b in {Q1/Q3, Q4/Q7}
		\draw (\a) -- (\a)|-(\b) -- (\b);
\end{tikzpicture}
\caption{Valid hook configurations on the permutations $3215647$ and $3145267$.} 
\label{fig:valid HC}
\end{center}\end{figure}
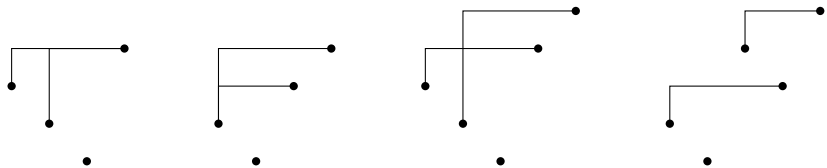
\begin{figure}[h]\begin{center}
\begin{tikzpicture}[scale=.5]
	\coordinate (S2) at (0,0);
	\coordinate (S3) at (5.5,0);
	\coordinate (S1) at ($(S3) + (5.5,0)$);
	\coordinate (S4) at ($(S1) + (6.5,0)$);
	\coordinate (P1) at (1,3);
	\coordinate (P2) at (2,2);
	\coordinate (P3) at (3,1);
	\coordinate (P4) at (4,4);
	\coordinate (P5) at (5,5);
	\coordinate (Q1) at (1,2);
	\coordinate (Q2) at (2,1);
	\coordinate (Q3) at (3,3);
	\coordinate (Q4) at (4,4);
	\coordinate (R3) at (3,4);
	\coordinate (R4) at (4,3);
	\coordinate (R5) at (5,5);
	\foreach \pt in {P1, P2, P3, P4, P5}
		\fill ($ (\pt) + (S1) $) circle (1.1mm);
	\foreach \pt in {P1, P2, P3, P4}
		\fill ($ (\pt) + (S2) $) circle (1.1mm);
	\foreach \pt in {Q1, Q2, Q3, Q4}
		\fill ($ (\pt) + (S3) $) circle (1.1mm);
	\foreach \pt in {Q1, Q2, R3, R4, R5}
		\fill ($ (\pt) + (S4) $) circle (1.1mm);
	\foreach \a/\b/\s in {P1/P4/S1, P2/P5/S1, P1/P4/S2, P2/P4/S2, Q1/Q3/S3, Q1/Q4/S3, Q1/R4/S4, R3/R5/S4}
		\draw ($(\a)+(\s)$) -- ($(\a)+(\s)$)|-($(\b)+(\s)$) -- ($(\b)+(\s)$);
\end{tikzpicture}
\caption{Hook configurations on permutations failing conditions (ii) or (iii) of \cref{Def:VHC}.}
\label{fig:invalid HC}
\end{center}\end{figure}

\begin{definition}
A valid hook configuration is said to be \textit{reduced} if every point is a hook endpoint or descent bottom (or both).

For $S \subseteq S_n$, we let $\RedVHC(S)$ be the set of reduced valid hook configurations on permutations in $S$, and $\RedVHC_k(S)$ be the the subset of reduced valid hook configurations that have exactly $k$ hooks.
\end{definition}

Note that the valid hook configuration on the left in \cref{fig:valid HC} is reduced while the one on the right is not. Indeed in the valid hook configuration on the right, the point $(6,6)$ is neither an endpoint nor a descent bottom.

\bigskip


We now define Dyck words and 3D-Dyck words, which will arise in several bijections later on:

\begin{definition} 
A \textit{Dyck word} of length $2k$ is a word with $k$ U's and $k$ D's, where up to any point in the word the number of U's is greater than or equal to the number of D's. We let $\Dyck_k$ denote the set of Dyck words of length $2k$. 
\end{definition}

The number of Dyck words of length $2k$ is given by $C_k = \frac{1}{k+1}\binom{2k}{k}$, the $k^{th}$ Catalan number.

\begin{definition}
A \textit{3D-Dyck word} of length $3n$ is a word that has $n$ X letters, $n$ Y letters, and $n$ Z letters, where at any point in the word the number of X's is always greater than or equal to the number of Y's, which in turn is greater than or equal to the number of Z's. Let $\Dyck^3_k$ be the set of 3D-Dyck words of length $3k$. 
\end{definition}

The number of 3D-Dyck words of length $3k$ is given by the $k^{th}$ 3D-Catalan number, $2\frac{(3k)!}{k!(k+1)!(k+2)!}$.

\subsection{Basic Properties}
We first establish some basic rules and properties that apply to all 312-avoiding valid hook configurations, then use these to prove further properties about 312-avoiding reduced valid hook configurations.
\medskip

The \emph{normalization} of a sequence of $n$ positive integers is the permutation obtained by replacing the $i^{\text{th}}$-smallest entry with $i$ for all $i$. For example, the normalization of $3759$ is $1324$. Given a permutation or a valid hook configuration, we can remove some number of points and then similarly normalize.

Specifically we can remove any point that is neither a descent bottom nor an endpoint from a valid hook configuration to obtain something that can be normalized to get a new valid hook configuration. If all such points are removed, we obtain a reduced valid hook configuration. Doing this to a 312-avoiding valid hook configuration preserves 312-avoidance. 

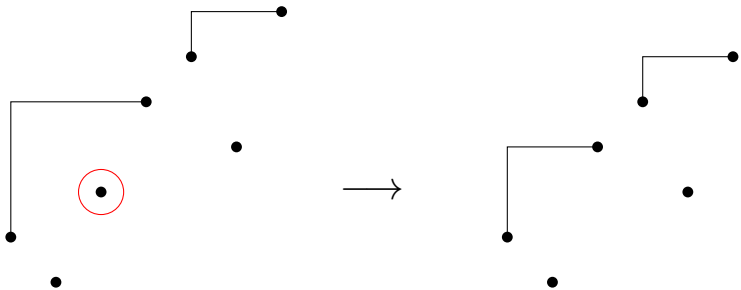
\begin{figure}[H]\begin{center}
\begin{tikzpicture}[scale=.6]
	\coordinate (Q1) at (1,2);
	\coordinate (Q2) at (2,1);
	\coordinate (Q3) at (3,3);
	\coordinate (Q4) at (4,5);
	\coordinate (Q5) at (5,6);
	\coordinate (Q6) at (6,4);
	\coordinate (Q7) at (7,7);
	\foreach \pt in {Q1, Q2, Q3, Q4, Q5, Q6, Q7}
		\fill (\pt) circle (1.2mm);
	\foreach \a/\b in {Q1/Q4, Q5/Q7}
		\draw (\a) -- (\a)|-(\b) -- (\b);
	\draw[red] (Q3) circle (5mm);
	\draw (9,3) node[scale=1.5] {$\longrightarrow$};
	\coordinate (P1) at (12,2);
	\coordinate (P2) at (13,1);
	\coordinate (P3) at (14,4);
	\coordinate (P4) at (15,5);
	\coordinate (P5) at (16,3);
	\coordinate (P6) at (17,6);
	\foreach \pt in {P1, P2, P3, P4, P5, P6}
		\fill (\pt) circle (1.2mm);
	\foreach \a/\b in {P1/P3, P4/P6}
		\draw (\a) -- (\a)|-(\b) -- (\b);
\end{tikzpicture}
\caption{Left: A 312-avoiding valid hook configuration on $2135647$. Right: The resulting 312-avoiding reduced valid hook configuration after the point circled in red is removed and the configuration is normalized.} 
\label{fig:valid HC}
\end{center}\end{figure}

Using this fact, Sankar showed in \cite{maya} that the relationship between reduced and non-reduced 312-avoiding valid hook configurations is given by a bijection

\begin{equation}\label{Eq1}\VHC(\Av_n(312))\to\bigcup_{r=0}^n\RedVHC(\Av_r(312))\times {[n]\choose r}.
\end{equation}

In a reduced valid hook configuration with $k$ hooks, the possible number of points is $3k-i$, where $0 \leq i < k$. The maximum number of points is obtained when each point is exactly one of descent bottom, SW endpoint, and NE endpoint, making one of each type of point for each hook. The minimum is obtained when every SW endpoint after the first is also simultaneously either a descent bottom or NE endpoint. This range for $i$ justifies why $f_k$ and $h_k$ are defined as they are.

\begin{observation} In order for a permutation to be 312-avoiding, any time there is a descent top, $(i, \pi_i)$, followed by a descent bottom, $(i+1, \pi_{i+1})$, we must have that $\pi_{i+1}$ is the largest number less than $\pi_i$ not contained in $\{\pi_1,...,\pi_{i-1}\}$. 
\end{observation}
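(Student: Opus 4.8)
The plan is to argue by contradiction using the explicit structure of a 312 pattern. Recall that $\pi$ contains a 312 pattern precisely when there are indices $a < b < c$ with $\pi_b < \pi_c < \pi_a$, i.e.\ a largest entry appearing first, then the smallest, then the middle one. So I would assume that at a descent $i$ (so $\pi_i > \pi_{i+1}$) the descent bottom $\pi_{i+1}$ is \emph{not} the largest value smaller than $\pi_i$ that is absent from $\{\pi_1, \dots, \pi_{i-1}\}$, and derive a forbidden pattern.

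Under that assumption there is a value $v$ with $\pi_{i+1} < v < \pi_i$ and $v \notin \{\pi_1, \dots, \pi_{i-1}\}$. Since $\pi$ is a permutation, $v = \pi_j$ for a unique index $j$. The first key step is to pin down where $j$ can be: because $v \notin \{\pi_1,\dots,\pi_{i-1}\}$, while $v \neq \pi_i$ (as $v < \pi_i$) and $v \neq \pi_{i+1}$ (as $v > \pi_{i+1}$), we must have $j \geq i+2$.

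With $j$ located, the second step is immediate: consider the three positions $i < i+1 < j$, whose entries satisfy $\pi_{i+1} < \pi_j < \pi_i$. These are ordered exactly as a 312 pattern (largest, smallest, middle), so $\pi$ contains $312$, contradicting the hypothesis that $\pi$ is 312-avoiding. Hence no such $v$ exists, and $\pi_{i+1}$ is the largest value below $\pi_i$ outside $\{\pi_1,\dots,\pi_{i-1}\}$, as claimed.

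I expect essentially no serious obstacle here; the only point requiring care is the bookkeeping that forces $j \geq i+2$ (ruling out that the witness $v$ sits at or before position $i+1$), after which the 312 pattern reads off directly. It is worth noting that the statement implicitly uses $\pi \in S_n$, so that every value, and in particular $v$, actually occurs as some $\pi_j$; this is what lets us convert ``a value is missing from a prefix'' into ``that value appears at a later position.''
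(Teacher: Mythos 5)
Your proof is correct and matches the paper's reasoning: the paper states this as an Observation without a separate proof, but the identical argument (a witness value $v$ with $\pi_{i+1}<v<\pi_i$ must occur at some position $j\geq i+2$, producing the pattern $\pi_i,\pi_{i+1},\pi_j$ as a $312$) is exactly what appears in the proof of \cref{lemma:BH_C heights}. Your extra care in ruling out $j\leq i+1$ is the only bookkeeping needed, and you handle it correctly.
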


We can use this to determine the heights of each individual descent bottom of a permutation given sufficient information about the permutation. For the following, we will let $t_i$ denote the height of the $i^{th}$ descent top and $b_i$ denote the height of the $i^{th}$ descent bottom within a given permutation.

\begin{definition}
For a permutation $\pi \in Av(312)$,  \\
$$BH_{i,\pi}=\{b_j: j \geq i\}.$$
And let $BH_\pi=BH_{1,\pi}$ denote the full set of heights of descent bottoms of $\pi$. If $C$ is a valid hook configuration on a permutation $\pi$, we let $BH_{i,C}=BH_{i,\pi}$
\end{definition}

\begin{lemma}
\label{lemma:BH_C heights}
For any permutation $\pi$, we have that $b_i$ is determined by $t_i$ and the set $BH_{i,\pi}$. Then $b_i$ is uniquely determined for all possible $i$ by the set of descent top heights and the set $BH_\pi$.
\end{lemma}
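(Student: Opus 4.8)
The plan is to reduce both assertions to a single explicit formula, namely
\[
b_i=\max\{b\in BH_{i,\pi}: b<t_i\},
\]
and then obtain the second assertion from the first by a short induction. The first assertion is the crux, and I would prove it directly from the Observation: writing $p_i$ for the position of the $i$-th descent, so that $t_i=\pi_{p_i}$ and $b_i=\pi_{p_i+1}$, the Observation says $b_i$ is the largest value less than $t_i$ that does not occur among $\pi_1,\dots,\pi_{p_i-1}$.

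For the formula I would prove the two inequalities separately. First, $b_i$ itself lies in $\{b\in BH_{i,\pi}:b<t_i\}$: it is the $i$-th descent bottom, so $b_i\in\{b_j:j\ge i\}=BH_{i,\pi}$, and $b_i<t_i$ since it is a descent bottom with top $t_i$; hence $b_i\le\max\{b\in BH_{i,\pi}:b<t_i\}$. For the reverse inequality, take any $b_j\in BH_{i,\pi}$ with $b_j<t_i$, so that $j\ge i$. The key point, and the step I expect to require the most care, is that such a $b_j$ cannot have appeared before position $p_i$: since descents are listed in order of position we have $p_j\ge p_i$, so the unique occurrence of the value $b_j$ is at position $p_j+1\ge p_i+1$, whence $b_j\notin\{\pi_1,\dots,\pi_{p_i-1}\}$. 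Thus $b_j$ is a value below $t_i$ missing from $\{\pi_1,\dots,\pi_{p_i-1}\}$, and by the Observation $b_i$ is the largest such value, giving $b_j\le b_i$. Combining the two inequalities yields the displayed formula, which exhibits $b_i$ as a function of $t_i$ and $BH_{i,\pi}$ alone.

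To deduce the second assertion I would induct on $i$, using that the descent bottoms are distinct (distinct positions force distinct values), so that
\[
BH_{i,\pi}=\{b_j:j\ge i\}=BH_\pi\setminus\{b_1,\dots,b_{i-1}\}.
\]
In the base case $BH_{1,\pi}=BH_\pi$ is given, so $b_1=\max\{b\in BH_\pi:b<t_1\}$ is determined by $t_1$ and $BH_\pi$. For the inductive step, once $b_1,\dots,b_{i-1}$ are known the displayed identity determines $BH_{i,\pi}$ from $BH_\pi$, and then the formula recovers $b_i$ from $t_i$ and $BH_{i,\pi}$. Hence all the $b_i$ are pinned down by the descent top heights together with the single set $BH_\pi$, with no further information about $\pi$ needed. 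The only bookkeeping subtlety I foresee is keeping track of the positional ordering of the descents, which is exactly what justifies $p_j\ge p_i$ and the set identity above; the $312$-avoidance hypothesis enters only through the Observation.
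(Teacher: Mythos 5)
Your proof is correct and follows essentially the same route as the paper: both establish the formula $b_i=\max\{b\in BH_{i,\pi}:b<t_i\}$ via $312$-avoidance (the paper exhibits the $312$ pattern $t_i,b_i,b$ directly, while you route the same fact through the Observation) and then run the identical induction using $BH_{i+1,\pi}=BH_\pi\setminus\{b_1,\dots,b_i\}$.
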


\begin{proof}
We have that $b_i<t_i$ and $b_i \in BH_{i,\pi}$. Suppose we chose $b_i$ such that there was some $b \in BH_{i,\pi}$ with $b_i < b < t_i$. Then $\pi$ would contain $t_i$ followed by $b_i$ followed at some point by $b$, which would be a 312. Therefore $b_i$ must be the largest element in $BH_{i,\pi}$ less than $t_i$, so its height is uniquely determined.

We can now prove uniqueness using induction. \\
\textit{Base Case:} There is exactly one choice for $b_1$, namely the largest element in $BH_\pi$ less than $t_1$. \\
\textit{Inductive Step:} Assume that the heights of $b_1, ..., b_i$ are uniquely determined. Then $BH_{i+1, \pi} = BH_\pi \setminus \{b_1,...,b_i\}$. Knowing $t_{i+1}$ and $BH_{i+1, \pi}$ now uniquely determines $b_{i+1}$. 
\end{proof}

\section{$|\RedVHC_k(\Av_{3k}(312))|$}
Having established basic properties of $\VHC(\Av(312))$, we now examine 312-avoiding reduced valid hook configurations with $k$ hooks and the maximal number of points. We will prove further properties about these configurations, leading to a bijection with 3D-Dyck words. This will prove \cref{conj: Sankar 1}. We begin by describing the hook endpoints.

A \emph{left-to-right maximum} of a permutation $\pi=\pi_1\cdots\pi_n$ is a point $(i,\pi_i)$ such that $\pi_j<\pi_i$ for all $1\leq j<i$. 

\begin{lemma}
\label{lemma:rising endpts} Let $C\in\RedVHC_k(\Av_{3k}(312))$ be a $312$-avoiding reduced valid hook configuration, and let $\pi\in S_{3k}$ be its underlying permutation. Every hook endpoint of $C$ is a left-to-right maximum of $\pi$. 
\end{lemma}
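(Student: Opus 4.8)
The plan is to argue by contradiction, handling northeast (NE) endpoints and southwest (SW) endpoints separately, and in each case manufacturing a forbidden $312$ pattern out of an endpoint that fails to be a left-to-right maximum. Throughout I would lean on the hypothesis that $C$ has the maximal point count $3k$: this forces each point to be \emph{exactly} one of a descent bottom, a SW endpoint, or a NE endpoint, so in particular no point is simultaneously a descent top and a descent bottom, and no NE endpoint is a descent bottom. I also use that a SW endpoint is by definition a descent top (condition (i)) joined by a hook to a strictly higher NE endpoint to its right, and that condition (ii) forbids any point from sitting above a hook.

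For a NE endpoint $(j,\pi_j)$, say with SW endpoint $(i,\pi_i)$ (so $i<j$ and $\pi_i<\pi_j$), I would suppose some $m<j$ has $\pi_m>\pi_j$ and rule out each location of $m$. If $i<m<j$, then $(m,\pi_m)$ lies above the horizontal part of the hook, contradicting condition (ii); and $m=i$ is impossible since $\pi_i<\pi_j$. Hence $m<i$. Now bring in the descent bottom $(i+1,\pi_{i+1})$, where $\pi_{i+1}<\pi_i<\pi_j<\pi_m$, and note $j\ge i+2$ because $\pi_j>\pi_i>\pi_{i+1}$ forces $j\neq i+1$. Then the positions $m<i+1<j$ carry the values $\pi_m>\pi_j>\pi_{i+1}$, which is a $312$ pattern, a contradiction.

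The SW case is the step I expect to be the main obstacle, since the obvious witness (the prefix maximum immediately before $i$) only yields a $321$ pattern rather than a $312$. The remedy is to choose the witness carefully: let $(i,\pi_i)$ be a SW endpoint that is not a left-to-right maximum, and take $d$ to be the \emph{largest} index less than $i$ with $\pi_d>\pi_i$, which exists by assumption; by maximality, $\pi_\ell<\pi_i$ for every $d<\ell<i$. The crux is to show $d\le i-2$: if instead $d=i-1$, there would be a descent at $i-1$, making $(i,\pi_i)$ a descent bottom, which is impossible because $(i,\pi_i)$ is a descent top and, by the maximality of the point count, no point can be both. Granted $d\le i-2$, we have $\pi_{d+1}<\pi_i<\pi_d$, so the positions $d<d+1<i$ carry values $\pi_d>\pi_i>\pi_{d+1}$, again a $312$ pattern, contradicting $312$-avoidance. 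Combining the two cases shows every hook endpoint of $C$ is a left-to-right maximum of $\pi$.
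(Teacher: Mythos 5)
Your proof is correct, but it is more elaborate than it needs to be, and the paper's argument shows why. The paper uses the same two ingredients you do --- the maximal point count forcing each point to be exactly one of descent bottom, SW endpoint, or NE endpoint, and the production of a forbidden $312$ --- but it extracts from them a single unified observation: \emph{any} hook endpoint $(i,\pi_i)$, whether SW or NE, cannot be a descent bottom, hence $\pi_{i-1}<\pi_i$; so if some $(j,\pi_j)$ with $j<i$ has $\pi_j>\pi_i$, then $\pi_j,\pi_{i-1},\pi_i$ is already a $312$ (note $j\neq i-1$ since $\pi_{i-1}<\pi_i<\pi_j$). This dissolves the ``main obstacle'' you identified in the SW case --- the natural witness is not the entry just after the violating point $d$, but the entry just before the endpoint $i$, and it works identically for both endpoint types. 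Your NE-case detour through condition (ii) and the descent bottom at $i+1$, and your SW-case choice of the maximal violating index $d$, are both sound but unnecessary; the only fact you truly need from the case analysis is the one you isolate in the SW case (``no point is both a descent top and a descent bottom''), generalized to ``no hook endpoint is a descent bottom.''
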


\begin{proof} Because $C$ has $3k$ points and $k$ hooks, each of its points must be exactly one of the following:

\begin{enumerate}
    \item Descent bottom 
    
    \item SW endpoint/descent top
    
    \item NE endpoint.
\end{enumerate}
Suppose by way of contradiction that there is some hook endpoint $(i, \pi_i)$ of $C$ that is not a left-to-right maximum of $\pi$. This means that there is a point $(j,\pi_j)$ with $1\leq j<i$ and $\pi_j>\pi_i$. Since $(i,\pi_i)$ is a hook endpoint, it cannot be a descent bottom. Thus, $\pi_{i-1}<\pi_i$. However, this implies that the entries $\pi_j, \pi_{i-1}, \pi_i$ form an occurrence of the pattern $312$ in $\pi$, which is a contradiction.
\end{proof}

\begin{figure}[H]
\begin{center}
\begin{tikzpicture}[scale=.8]
\fill (7.5, 3) circle (.9mm) node[below] {$(i,\pi_i)$} (8.5, 1) circle (.9mm) node[below] {$(i+1,\pi_{i+1})$} (9.5, 2) circle (.9mm) node[below] {$(i+2,\pi_{i+2})$};
\draw (7.5,3) --(7.5,5) -- (11.5,5);
\draw (9.5,2) -- (9.5,4) -- (11,4);
\draw[red] (7.5,3) node[right, scale=1.5] {\textbf{3}};
\draw[red] (8.5,1) node[right, scale=1.5] {\textbf{1}};
\draw[red] (9.5,2) node[right, scale=1.5] {\textbf{2}};
\fill (2, 3) circle (.9mm) node[below] {$(i,\pi_i)$} (3, 2) circle (.9mm) node[below] {$(i+1,\pi_{i+1})$};
\draw (1,1) -- (1,3) -- (2,3);
\draw (2,3) -- (2,4) -- (3.5,4);
\draw[red] (2,3) node[right, scale=2] {\textbf{x}};
\end{tikzpicture}
\caption{Non-allowed hook placement examples discussed in \cref{lemma:rising endpts}. Points that are more than one of descent bottom, SW and NE endpoint are marked with a red X.}
\label{fig: invalid placement}
\end{center}
\end{figure}
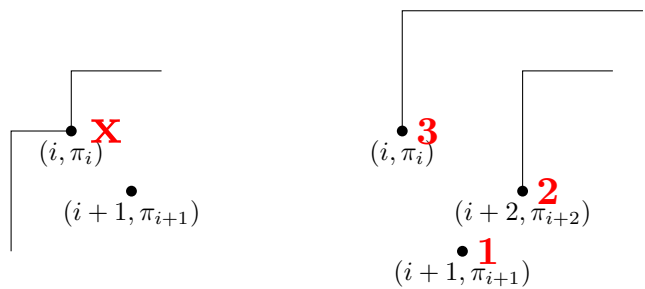
\begin{figure}[H]
\begin{center}
\begin{tikzpicture}[scale=.8]
\fill (1, 2) circle (.9mm) node[below] {$(i,\pi_i)$} (2, 1) circle (.9mm) node[below] {$(i+1,\pi_{i+1})$} (3, 3) circle (.9mm) node[below] {$(i+2,\pi_{i+2})$};
\draw (1,2) --(1,3) -- (3,3);
\fill (7, 2) circle (.9mm) node[below] {$(i,\pi_i)$} (8, 1) circle (.9mm) node[below] {$(i+1,\pi_{i+1})$} (9, 3) circle (.9mm) node[below] {$(i+2,\pi_{i+2})$};
\draw (7,2) --(7,4.5) -- (11,4.5);
\draw (9,3) -- (9,4) -- (10.5,4);
\fill (14.5, 2) circle (.9mm) node[below] {$(i,\pi_i)$} (15.5, 3) circle (.9mm) node[below] {$(i+1,\pi_{i+1})$};
\draw (13.5,1) -- (13.5,2) -- (14.5,2);
\draw (15.5,3) -- (15.5,4) -- (16.5,4);
\end{tikzpicture}
\caption{Allowed hook placement examples.}
\label{fig: possible placement}
\end{center}  
\end{figure}
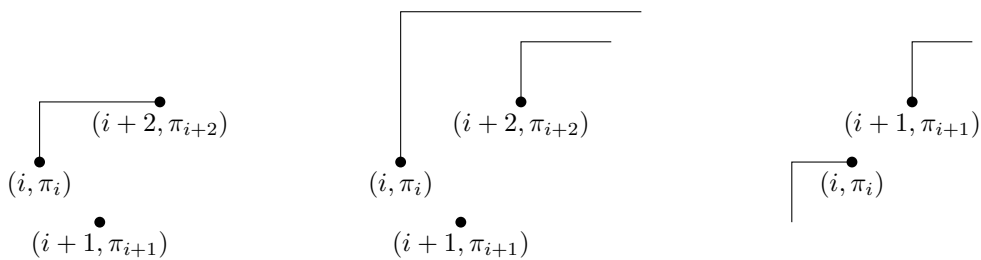

\begin{definition}
For a valid hook configuration $C$, let $C_{\vertc}(i)$ denote the point in $C$ with vertical component $i$, and $C_{\hor}(i)$ denote the point in $C$ with horizontal component $i$. 
\end{definition}

Given a valid hook configuration, $C$, we can remove all points that are not endpoints and then normalize to get a new (non-valid) hook configuration $\Hooks(C)$. For any set of hook configurations $H$, let $\Hooks(H) = \{\Hooks(C) : C \in H\}$.

\begin{corollary}
\label{corollary:dyck bij}
The set $\Hooks(\RedVHC_k(\Av_{3k}(312)))$ is in bijection with Dyck words.
\end{corollary}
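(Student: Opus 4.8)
The plan is to show that the image of $\Hooks$ on $\RedVHC_k(\Av_{3k}(312))$ is exactly the set of non-crossing perfect matchings of $2k$ points, which is the classical Catalan family in bijection with $\Dyck_k$. First I would pin down the target of $\Hooks$. Let $C\in\RedVHC_k(\Av_{3k}(312))$ have underlying permutation $\pi$. As observed in the proof of \cref{lemma:rising endpts}, maximality forces each point of $C$ to be exactly one of a descent bottom, an SW endpoint, or an NE endpoint; in particular there are exactly $k$ SW and $k$ NE endpoints and no point is simultaneously both. By \cref{lemma:rising endpts} every endpoint is a left-to-right maximum of $\pi$, so reading the $2k$ endpoints by increasing horizontal coordinate their heights strictly increase. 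Hence after deleting the $k$ descent bottoms and normalizing, $\Hooks(C)$ is a hook configuration on the identity permutation $12\cdots(2k)$ whose $k$ hooks form a \emph{perfect matching} of its $2k$ points, each point being an endpoint of exactly one hook.

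Next I would identify which matchings occur. On the identity, a hook from position $a$ to position $b$ occupies the interval $[a,b]$, and a direct check shows two hooks meet away from their endpoints precisely when their intervals are properly linked, i.e. $a_1<a_2<b_1<b_2$; nested or disjoint intervals never cross. Thus condition (iii) of \cref{Def:VHC}, which is preserved under point deletion and normalization, is equivalent to the statement that $\Hooks(C)$ is a \emph{non-crossing} perfect matching. Encoding each such matching as a word $w\in\{U,D\}^{2k}$ with $w_j=U$ when position $j$ is an SW endpoint and $w_j=D$ when it is an NE endpoint, I would note that in every prefix the excess of $U$'s over $D$'s counts the hooks currently left open, hence is nonnegative, so $w\in\Dyck_k$. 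Since a non-crossing matching is uniquely recovered from $w$ by matching each $D$ to the nearest preceding unmatched $U$, this word map is injective on the image of $\Hooks$.

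It then remains to prove surjectivity: that every Dyck word is realized. Given $w\in\Dyck_k$ with its non-crossing matching $M$, I would build a preimage by inserting, immediately after each SW endpoint (which must be a descent top), a descent bottom, and then assigning heights: the $2k$ endpoints are taken as the left-to-right maxima, and each inserted descent-bottom height is forced to be the largest as-yet-unused value below its descent top, exactly as described by \cref{lemma:BH_C heights} and the preceding observation. One would then check that the resulting $3k$-point permutation avoids $312$, that the hooks of $M$ satisfy conditions (i)--(iii) of \cref{Def:VHC}, and that the configuration is reduced with $\Hooks(C)=M$. This reconstruction is where I expect the real work to lie: one must verify that placing the descent bottoms at their forced heights neither creates a $312$ pattern nor pushes a point above a hook, and that the endpoint heights can be chosen consistently with all of these constraints simultaneously. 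Granting this, $\Hooks$ surjects onto the non-crossing matchings, the word map becomes a bijection onto $\Dyck_k$, and the corollary follows.
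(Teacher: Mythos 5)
Your proposal is correct and follows essentially the same route as the paper: classify each endpoint as SW or NE, use \cref{lemma:rising endpts} to place the endpoints on the identity, and encode SW/NE endpoints as matched parentheses (U/D) to land in $\Dyck_k$. You are more explicit than the paper about why condition (iii) forces the matching to be non-crossing and about surjectivity (which the paper only settles later, inside the proof of \cref{thrm:3Dyck bij}), but the underlying argument is the same.
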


\begin{proof}
By \cref{lemma:rising endpts} the $i^{th}$ endpoint in $\Hooks(C)$ is of the form $(i,i)$ for all $i \in [k]$ and $C \in \RedVHC_k(\Av_{3k}(312))$, and is either a SE or NE endpoint (not both).
We can write each SW endpoint as a left parenthesis and each NE endpoint as a right parenthesis going from left to right. Any two parentheses that are matched will be the two endpoints of the same hook. This gives a bijection with correctly-matched parentheses which can be equivalently represented as Dyck words.
\end{proof}

This gives us a full description of the hooks for any configuration in $\RedVHC_k(\Av_{3k}(312))$. Now we just need to show how descent bottoms can be inserted and show that this is consistent with 3D-Dyck words.

\begin{definition}
For $w \in \Dyck^3_k$, let $YZ(w)$ be the word obtained be removing all X's and leaving only the Y's and Z's.
\end{definition}

We observe that YZ($\Dyck^3_k$) = $Dyck_k$.

We are now ready to prove \cref{conj: Sankar 1}.

\begin{theorem} 
\label{thrm:3Dyck bij}
$\RedVHC_k(\Av_{3k}(312))$ is in bijection with $\Dyck^3_k$.
\end{theorem}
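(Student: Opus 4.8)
The plan is to construct an explicit bijection $\Phi : \RedVHC_k(\Av_{3k}(312)) \to \Dyck^3_k$ by reading a configuration $C$ from left to right and recording, for each of its $3k$ points, a single letter according to its type: an X for each SW endpoint/descent top, a Z for each NE endpoint, and a Y for each descent bottom. Since $C$ has exactly $3k$ points and $k$ hooks, \cref{lemma:rising endpts} and the trichotomy established in its proof guarantee that each point is exactly one of these three types, with exactly $k$ of each, so the output word $w = \Phi(C)$ has $k$ copies of each letter. The heart of the argument is to show that $w \in \Dyck^3_k$ (the prefix inequalities $\#X \geq \#Y \geq \#Z$ hold) and that $\Phi$ is invertible.

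\textbf{Verifying the prefix conditions.} First I would show $\#X \geq \#Z$ at every prefix. This is essentially the Dyck-word content of \cref{corollary:dyck bij}: forgetting the descent bottoms (the Y's), the X's and Z's are exactly the SW and NE endpoints read left to right, and since hooks are properly nested with each NE endpoint matched to an earlier SW endpoint, $YZ$-type prefix balance gives $\#X \geq \#Z$. Indeed $YZ(w)$ and the analogous ``$XZ$'' restriction both recover the underlying Dyck word of $\Hooks(C)$. Next I would show $\#X \geq \#Y$: each descent bottom must sit directly after a descent top at a smaller height, and condition (i) of \cref{Def:VHC} forces every descent top to be a SW endpoint; using the 312-avoidance structure (and \cref{lemma:BH_C heights} controlling where descent bottoms can land) one argues that no prefix can accumulate more descent bottoms than SW endpoints. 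The subtler inequality is $\#Y \geq \#Z$: I would argue that an NE endpoint cannot be read before its ``partner'' descent bottom has appeared — intuitively, because condition (ii) (no point lies above a hook) together with reducedness forces each NE endpoint to be preceded by the descent bottom that the hook covers, so Z's never outpace Y's.

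\textbf{Inverting the map.} Given $w \in \Dyck^3_k$, I would reconstruct $C$ as follows. The positions of the X's and Z's determine $\Hooks(C)$ via the Dyck-word bijection of \cref{corollary:dyck bij} (matching each Z to its nesting-partner X). The positions of the Y's tell us the horizontal coordinates of the descent bottoms, and their relative order among all $3k$ points is fixed by $w$; the key point is that the actual \emph{heights} of the descent bottoms are then forced, not chosen — this is exactly \cref{lemma:BH_C heights}, which says $b_i$ is uniquely determined by $t_i$ and the set $BH_{i,\pi}$. So once the combinatorial skeleton (which points are tops, bottoms, endpoints, and in what left-to-right order) is fixed by $w$, there is at most one 312-avoiding permutation realizing it, and hence at most one configuration. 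I would then check that the configuration so produced is genuinely valid and reduced, i.e. that the prefix inequalities of $w$ translate back into conditions (i)--(iii).

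\textbf{Main obstacle.} I expect the crux to be proving that $\Phi$ lands in $\Dyck^3_k$ and, dually, that every $w \in \Dyck^3_k$ yields a \emph{valid} configuration — in particular the $\#Y \geq \#Z$ inequality and its converse, since this is the condition that genuinely couples the descent bottoms to the hooks rather than treating them independently. The containment $\Phi(\RedVHC_k)\subseteq \Dyck^3_k$ should follow from the no-point-above-a-hook and nonintersection conditions, while surjectivity requires showing that the height assignment forced by \cref{lemma:BH_C heights} never violates validity; carefully tracking how a Z-letter's position relative to the preceding Y's corresponds to the vertical span of a hook is where the real work lies. Everything else (counting, well-definedness of letter types) follows cleanly from \cref{lemma:rising endpts} and the point trichotomy.
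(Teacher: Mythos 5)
Your map is not the paper's map, and unfortunately it does not work: the paper's $\phi$ reads the points of $C$ in order of increasing \emph{height} (via $C_{\vertc}(i)$), assigning X to descent bottoms, Y to SW endpoints, and Z to NE endpoints, whereas you read the points left to right by horizontal position. That change is fatal. In a configuration in $\RedVHC_k(\Av_{3k}(312))$ every descent bottom sits at horizontal position $i+1$ where its descent top sits at position $i$, and every descent top is a SW endpoint; so in your left-to-right word every Y is immediately preceded by an X. Your image is therefore contained in the set of $3$D-Dyck words in which each Y directly follows an X --- a set of size $C_k$ (this is exactly the $\Duck_{k,0}$ computation in Section 6), not $2\frac{(3k)!}{k!(k+1)!(k+2)!}$. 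Words such as $\mathrm{XXYYZZ}$ are never attained, so $\Phi$ is not surjective. Dually, $\Phi$ is not injective: the left-to-right type sequence is completely determined by the hook skeleton $\Hooks(C)$ (the descent bottoms' positions are forced), so two configurations with the same hooks but different descent-bottom \emph{heights} --- and for $k=2$ there are already $5$ configurations sharing only $2$ skeletons --- map to the same word.

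The information your encoding discards is precisely what makes the theorem true. Your inversion step appeals to \cref{lemma:BH_C heights}, but that lemma determines $b_i$ only from $t_i$ \emph{and the set} $BH_{i,\pi}$ of descent-bottom heights; the left-to-right word gives you no access to that set. The paper's height-ordered reading fixes this: the positions of the X's in $\phi(C)$ literally record $BH_\pi$ as a subset of $[3k]$, the Y/Z subword recovers the Dyck word of $\Hooks(C)$ via \cref{corollary:dyck bij}, and then \cref{lemma:BH_C heights} pins down each $b_i$. The prefix inequality $\#\mathrm{X}\geq\#\mathrm{Y}$ then carries real content (below the $i^{\text{th}}$ descent top there must lie at least $i$ descent bottoms), rather than being the trivial adjacency observation it becomes in the left-to-right reading. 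To repair your argument you would need to replace the reading order by the vertical one and rework both containment and inversion accordingly.
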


\begin{proof}

We define the map $\phi : \RedVHC_k(\Av_{3k}(312)) \rightarrow $ $\Dyck^3$  as follows:
Given a hook configuration $C \in \RedVHC_k(\Av_{3k}(312))$, the $i^{th}$ letter in $\phi(C)$ is
\begin{enumerate}
    \item[X] if $C_{\vertc}(i)$ is a descent bottom
    \item[Y] if $C_{\vertc}(i)$ is a SW endpoint/descent top
    \item[Z] if $C_{\vertc}(i)$ is a NE endpoint.
\end{enumerate}

\cref{fig:phi} illustrates how $\phi$ acts on a given hook configuration.
        
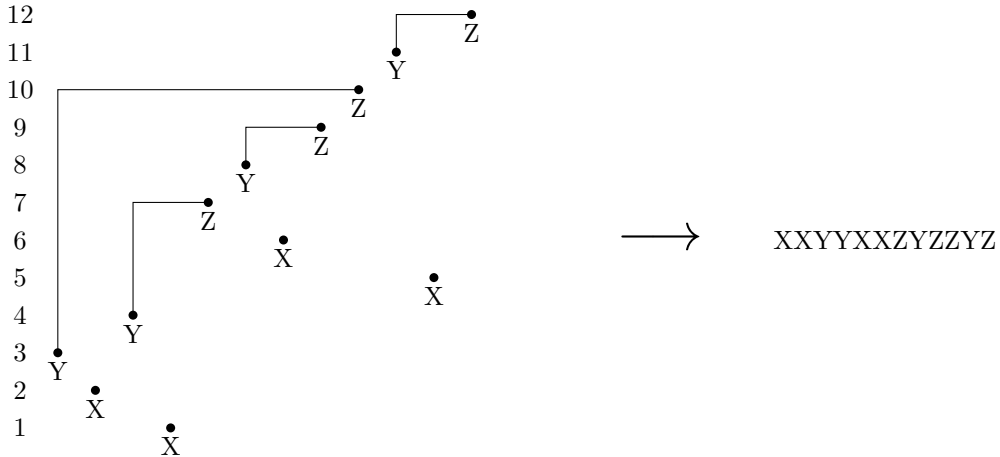
\begin{figure}[H]\begin{center}
\begin{tikzpicture}[scale=.5]
    \draw (0,1) node[scale=1] {1};
    \draw (0,2) node[scale=1] {2};
    \draw (0,3) node[scale=1] {3};
    \draw (0,4) node[scale=1] {4};
    \draw (0,5) node[scale=1] {5};
    \draw (0,6) node[scale=1] {6};
    \draw (0,7) node[scale=1] {7};
    \draw (0,8) node[scale=1] {8};
    \draw (0,9) node[scale=1] {9};
    \draw (0,10) node[scale=1] {10};
    \draw (0,11) node[scale=1] {11};
    \draw (0,12) node[scale=1] {12};
	\coordinate (P1) at (1,3);
	\coordinate (P2) at (2,2);
	\coordinate (P3) at (3,4);
	\coordinate (P4) at (4,1);
	\coordinate (P5) at (5,7);
	\coordinate (P6) at (6,8);
	\coordinate (P7) at (7,6);
	\coordinate (P8) at (8,9);
	\coordinate (P9) at (9,10);
	\coordinate (P10) at (10,11);
	\coordinate (P11) at (11,5);
	\coordinate (P12) at (12,12);
	\foreach \pt in {P1, P2, P3, P4, P5, P6, P7, P8, P9, P10, P11, P12}
		\fill (\pt) circle (1.2mm);
	\foreach \pt in {P2, P4, P7, P11}
		\draw (\pt) node[below, scale=1] {X};
	\foreach \pt in {P1, P3, P6, P10}
		\draw (\pt) node[below, scale=1] {Y};
	\foreach \pt in {P5, P8, P9, P12}
		\draw (\pt) node[below, scale=1] {Z};
	\foreach \a/\b in {P1/P9, P3/P5, P6/P8, P10/P12}
		\draw (\a) -- (\a)|-(\b) -- (\b);
	 \draw (23,6) node {XXYYXXZYZZYZ};
    \draw (17,6) node[scale=2] {$\longrightarrow$};
\end{tikzpicture}
\caption{Conversion from a hook configuration in $\RedVHC_k(\Av_{3k}(312))$ to the 3D-Dyck word XXYYXXZYZZYZ} 
\label{fig:phi}
\end{center}\end{figure}

Ignoring descent bottoms and X's for a moment, \cref{corollary:dyck bij} tells us that \\
$\Hooks(\RedVHC_k(\Av_{3k}(312)))$ is in bijection with $Dyck_k$, so the number of Y's up to any point will never be less than the number of Z's. This still holds when we add back in descent bottoms and X's.

 Below the $i^{th}$ descent top $t_i$ we must have at least $i$ descent bottoms in order for each of the $i$ descent tops (SW endpoints) to have a descent bottom. Recall that the $t_1,...,t_{i-1}<t_i$, so we must also have $b_1,...,b_{i-1}<t_i$. Thus in $\phi(C)$ the number of X's up to any point must be at least the number of Y's, which in turn must be at least the number of Z's. This gives that every hook configuration in $\RedVHC_k(\Av_{3k}(312))$ is sent to a word in $\Dyck^3_k$.



To finish the proof, we will show that given $w \in \Dyck^3_k$, we can find the unique $C \in \RedVHC_k(\Av_{3k}(312))$ such that $\phi(C) = w$.

Clearly for any $w \in \Dyck^3_k$, we can use the placement of X's to determine the set $BH_\pi$ for a corresponding valid hook configuration $C$. Any two words with different placement of X's will be sent to different hook configurations.
According to Corollary 3.1.1, $\Hooks(H)$ must also be uniquely determined by YZ$(w)$.
Since the set of descent bottoms is exactly the set of points immediately following each descent top, we can actually determine the exact horizontal components of all hook endpoints.
From there \cref{lemma:BH_C heights} tells us that the height of the $i^{th}$ descent bottom is uniquely determined for all $i \in [k]$. Thus all of $C$ is uniquely determined and is in $\RedVHC_k(\Av_{3k}(312))$.

\end{proof}

This bijection implies the following corollary:

\begin{corollary}
We have $|\RedVHC_k(\Av_{3k}(312))| = 2 \frac{(3k)!}{k!(k+1)!(k+2)!}$.
\end{corollary}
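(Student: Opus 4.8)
The plan is to read off the corollary as an immediate numerical consequence of the bijection just established, so there is almost no independent work to do. By \cref{thrm:3Dyck bij} the map $\phi$ is a bijection from $\RedVHC_k(\Av_{3k}(312))$ onto $\Dyck^3_k$, hence
$$|\RedVHC_k(\Av_{3k}(312))| = |\Dyck^3_k|.$$
It then suffices to invoke the enumeration recorded when 3D-Dyck words were introduced, namely $|\Dyck^3_k| = 2\frac{(3k)!}{k!(k+1)!(k+2)!}$, the $k$th three-dimensional Catalan number. Combining the two displayed equalities gives the claim.

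If one preferred a self-contained derivation of the count rather than citing it, I would first observe that a 3D-Dyck word of length $3k$ is exactly a lattice (ballot) word on the alphabet $\{X,Y,Z\}$ with $k$ of each letter satisfying $\#X \geq \#Y \geq \#Z$ in every prefix. Such words are in bijection with standard Young tableaux of rectangular shape with three rows and $k$ columns: recording the position of the $j$th $X$ in row $1$, the $j$th $Y$ in row $2$, and the $j$th $Z$ in row $3$ yields a filling whose rows and columns are increasing precisely because of the prefix condition. The enumeration of these tableaux then follows from the hook length formula applied to the $3\times k$ rectangle.

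The only genuine computation in this alternative is evaluating the hook length product for a three-row rectangle and checking that it simplifies to $k!\,(k+1)!\,(k+2)!/2$, which returns $\frac{2(3k)!}{k!(k+1)!(k+2)!}$. That algebraic simplification is the one place with any friction, but it is entirely routine, and I would expect no conceptual obstacle anywhere in the argument: all of the substantive combinatorics — the left-to-right maximum structure of the endpoints, the reduction to Dyck words via \cref{corollary:dyck bij}, and the reconstruction of descent-bottom heights via \cref{lemma:BH_C heights} — has already been absorbed into the proof of \cref{thrm:3Dyck bij}. The corollary is thus best presented in a single line, deferring the classical count of $\Dyck^3_k$ either to the earlier stated fact or to the hook length computation sketched above.
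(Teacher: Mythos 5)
Your proposal matches the paper exactly: the paper offers no separate proof, simply noting that the corollary is an immediate consequence of the bijection in \cref{thrm:3Dyck bij} together with the previously stated count of $\Dyck^3_k$ as the $k$th three-dimensional Catalan number. Your optional tableau/hook-length derivation of that count is a correct standard argument but is not needed, since the paper takes the enumeration of 3D-Dyck words as known.
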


\section{Bijection between $\RedVHC_k(\Av(312))$ and underlined Duck words}
We will now generalize $\phi$ to get a bijection from the set of all 312-avoiding reduced valid hook configurations to the set of underlined duck words.

\begin{definition}
\textit{Underlined duck words} are words in $\Dyck^3$ with the following properties: \\
1. Some number (possibly zero) of the Y's in the word are underlined.\\
2. Underlined Y's do not have an X immediately before them.
\end{definition}

For example XYXXZ\underline{Y}YZXZ\underline{Y}Z and XYXZYZ are underlined duck words while XYZX\underline{Y}Z is not because the underlined Y has an X immediately before it.

We let $\underline{\Duck}_k$ be the number of underlined duck words of length $3k$. A \emph{$(k,i)$-underlined duck word} is a duck word with $3k$ letters and $i$ underlines. We denote the number of such words by $\underline{\Duck}_{k,i}$. This terminology comes from a rewriting rule which we will give in \cref{section:rewritten}.

\begin{theorem}
\label{thrm:duck words bij}
The set $\RedVHC_k(\Av(312))$ is in bijection with underlined duck words of length $3k$. Specifically, for each $i$, $\RedVHC_k(\Av_{3k-i}(312))$ is in bijection with $(k,i)$-underlined duck words.
\end{theorem}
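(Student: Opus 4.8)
The plan is to generalize the map $\phi$ from \cref{thrm:3Dyck bij} to handle reduced valid hook configurations that are \emph{not} maximal, i.e. those in $\RedVHC_k(\Av_{3k-i}(312))$ with $i>0$. The essential new phenomenon is that when a configuration has fewer than $3k$ points, some points must play a double role: by the point count, a reduced configuration with $k$ hooks and $3k-i$ points must have exactly $i$ points that are simultaneously a SW endpoint/descent top and one of the other two types (NE endpoint or descent bottom). First I would establish, in analogy with \cref{lemma:rising endpts}, a structural classification of the points of such a configuration, showing that every hook endpoint is still a left-to-right maximum and identifying precisely how the ``merged'' points arise. The natural guess, motivated by the definition of underlined duck words, is that each of the $i$ deficiencies corresponds to a SW endpoint whose associated descent bottom coincides with it, and this should be recorded by an underlined $Y$.

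Next I would extend the letter-assignment rule. Reading the points by increasing height, I assign $X$ to a descent bottom, $Y$ to a SW endpoint/descent top, and $Z$ to an NE endpoint, exactly as before; the new rule is that a point which is both a SW endpoint and a descent bottom (so that there is no separate $X$ immediately below it) receives an \emph{underlined} $Y$. I then need to verify two things: that the output lies in $\Dyck^3_k$ with $i$ underlines, and that the underlines satisfy the duck-word condition. The $\Dyck^3$ inequalities follow by the same counting argument as in \cref{thrm:3Dyck bij} --- below the $j^{th}$ descent top there must still be at least $j$ descent bottoms, and the ordering of $Y$'s and $Z$'s is governed by the underlying Dyck structure of the hooks via \cref{corollary:dyck bij}. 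The condition that an underlined $Y$ has no $X$ immediately before it should be a direct translation of the fact that a SW-endpoint-which-is-also-a-descent-bottom has no distinct descent bottom point sitting just beneath it in height; this is where I must argue carefully that ``immediately before'' in the word corresponds exactly to ``consecutive in height,'' using the 312-avoidance-driven height determination from \cref{lemma:BH_C heights}.

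For surjectivity and injectivity I would mimic the reconstruction argument. Given a $(k,i)$-underlined duck word $w$, the positions of the $X$'s together with the heights of the underlined $Y$'s (which themselves serve as descent bottoms) determine the full set $BH_\pi$ of descent-bottom heights. Applying $YZ$ and \cref{corollary:dyck bij} recovers $\Hooks(C)$, hence the horizontal positions and SW/NE nature of all endpoints; since each descent bottom immediately follows its descent top, the horizontal coordinates of every point are pinned down, and then \cref{lemma:BH_C heights} determines all descent-bottom heights uniquely. I must check that the resulting plot is genuinely a permutation on $3k-i$ points (the merged points reduce the count by exactly $i$) and that it is reduced and $312$-avoiding. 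Assembling these inverse steps shows $\phi$ is a bijection onto $(k,i)$-underlined duck words, and taking the union over $i$ gives the full statement.

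The main obstacle I anticipate is the precise handling of the merged points: I need to rule out the possibility that a SW endpoint coincides with its NE endpoint in a way that breaks the clean three-way classification, and more delicately, I must confirm that ``underlined $Y$ with no preceding $X$'' is equivalent to ``descent top whose descent bottom is itself,'' rather than to some other coincidence such as a descent top that is also an NE endpoint. Resolving this requires showing that in a $312$-avoiding reduced configuration the only admissible double-role for a SW endpoint is with a descent bottom and not with an NE endpoint of a different hook, which I expect to follow from the left-to-right-maximum property and the non-crossing condition (iii) of \cref{Def:VHC}, together with the height-reconstruction lemma. Once that equivalence is nailed down, the rest is a routine adaptation of the maximal case.
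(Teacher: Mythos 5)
There is a genuine gap in the core construction. An underlined duck word is by definition a word in $\Dyck^3_k$ (hence of length $3k$, with $k$ of each letter) carrying $i$ underlines, but your letter-assignment reads the $3k-i$ points of $C$ and emits one letter per point, producing a word of length $3k-i$ in which the X-count falls short of $k$. So you are not landing in the stated codomain at all, and the duck-word condition ``no X immediately before an underlined Y'' cannot even be posed for your output in the intended sense. The idea you are missing is that each doubly-used point must contribute \emph{two} letters, and the crux is \emph{where} the second letter goes: the paper splits each point $p$ that is both a SW endpoint and something else into $p$ (retaining its role as descent bottom or NE endpoint) and a newly inserted SW endpoint $p'$ placed at horizontal position just after $p$ and at height exactly one above the \emph{previous hook endpoint}. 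This yields a configuration $C'\in\RedVHC_k(\Av_{3k}(312))$ to which \cref{thrm:3Dyck bij} applies verbatim, and the specific height placement is precisely what guarantees that the letter immediately preceding each inserted (underlined) Y is a Y or Z rather than an X. Injectivity and surjectivity then reduce to showing the insertion is reversible given the set of underlined Y's. Without specifying this placement, your claim that ``no separate X immediately below it'' translates to ``no X immediately before it in the word'' has no content to attach to.

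Your anticipated resolution of the ``merged points'' obstacle also goes the wrong way: you propose to prove that the only admissible double role for a SW endpoint is with a descent bottom, but the SW-plus-NE coincidence genuinely occurs and must be handled. For instance, on the $312$-avoiding permutation $21435$ with hooks from $(1,2)$ to $(3,4)$ and from $(3,4)$ to $(5,5)$, the point $(3,4)$ is simultaneously the NE endpoint of the first hook and the SW endpoint (descent top) of the second, and every point is an endpoint or descent bottom, so this is an element of $\RedVHC_2(\Av_5(312))$. Any correct bijection must account for such configurations; the paper does so uniformly, since the same insertion rule (with $p'$ at height $\pi_i+1$ when $p$ is an NE endpoint) covers both kinds of coincidence. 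Your reconstruction sketch for surjectivity would inherit the same defect, since it assumes the underlined Y's themselves serve as descent bottoms.
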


\begin{proof}
Given a hook configuration $C \in \RedVHC_k(\Av_{3k-i}(312))$, we can obtain a new hook configuration $C'$ by doing the following:

\begin{itemize}
\item For each point $p = (i, \pi_i)$ that is both a SW endpoint and either a NW endpoint or descent bottom, insert a new point $p'$ with horizontal component $i+1$ and vertical component one higher than the previous hook endpoint. In the case where $p$ is a NE endpoint of a hook, $p'$ has vertical component $\pi_i+1$. 
\item Let the hook whose southwest endpoint used to be $p$ now have southwest endpoint $p'$.  
\end{itemize}

Every hook configuration in $\RedVHC_k(\Av_{3k-i}(312))$ must have exactly $i$ points that are both a southwest endpoint and something else, so each $C'$ must indeed have $3k$ points and $k$ hooks.  We know $C'$ is a reduced hook configuration because all points that were added were southwest endpoints, and each point that used to be southwest endpoint but is not anymore is still either a descent bottom or northeast endpoint.

Any SW endpoint that was below the previous hook endpoint in $C$ must have been a descent bottom. In $C'$ that hook's SW endpoint is now above that of the previous hook endpoint, so all hook endpoints are left to right maxima. For any inserted SW endpoint $p' = (j, \pi_j)$ we also have a point $(l, \pi_j-1)$ for some $l<j$ (this is the point $p$ that $(j, \pi_j)$ split off from). If $(j, \pi_j)$ starts a 312, then $(l, \pi_j -1)$ would also be in a 312, so $C$ would have had to contain a 312. Since $C$ is 312-avoiding, $C'$ must be as well, so $C' \in \RedVHC_k(\Av_{3k}(312))$.

For a hook configuration $C \in \RedVHC_k(\Av_{3k-i}(312))$, we obtained $C'$ by inserting $i$ points.



Let $\phi'(C)$ be the word $\phi(C')$ with each Y underlined that corresponds to one of the inserted points. \cref{fig:phi'} illustrates how $\phi'(C)$ is obtained for a given hook configuration. Every point inserted to obtain C' has another hook endpoint one spot vertically below, so each underlined Y will always have either a Z or another Y right before it in the word, never an X.
Thus $\phi'(C) \in \underline{\Duck}_{k,i}$ for all $C \in \RedVHC_k(\Av_{3k-i}(312))$.

We claim that given $C'$ together with the list of points which were inserted we can reconstruct $C$. Indeed we simply remove all inserted points and re-normalize. For each hook whose southwest endpoint had been an inserted point, $p'$, its new southwest endpoint is now $p$, the point whose horizontal component is one less than $p'$. We will use this ability to recover $C$ to show that $\phi'$ is both injective and surjective.

\textbf{Injectivity of $\phi'$:} Suppose $\phi'(C_1)=\phi'(C_2)$. Ignoring which Y's are underlined they must both get sent to the same 3D-Dyck word, so $C_1'=C_2'=C'$. Given our chosen set of underlined Y's (and thus our known set of inserted points) we can recover the unique $C$ that yeilded $C'$, so $C_1=C_2$.

\textbf{Surjectivity of $\phi'$:} For any $\underline{w} \in \underline{\Duck}_{k,i}$ we can ignore which Y's are underlined for a moment to get a word $w \in \Dyck^3$. Taking $\phi^{-1}(w)$ gives us $C'$. Since no underlined Y has an X immediately before it, its corresponding SW endpoint must have another endpoint one spot vertically below it. We can remove this SW endpoints, making its hook instead start at the endpoint below. Doing this for all underlined Y's gives us a configuration $C$.  All descent bottoms with heights originally between two endpoints will still have heights between those two endpoints. We removed exactly $i$ southwest endpoints, so $C$ is a configuration on $3k-i$ points. All points in $C$ are either a hook endpoint, descent bottom or both. Removing points will never introduce a 312. Thus $C \in \RedVHC_k(\Av_{3k-i}(312))$ and $\phi'(C)= \underline{w}$.  
\end{proof}

\begin{figure}[h]\begin{center}
\begin{tikzpicture}[scale=.4]
	\coordinate (P1) at (1,4);
	\coordinate (P2) at (2,3);
	\coordinate (P3) at (3,2);
	\coordinate (P4) at (4,6);
	\coordinate (P5) at (5,7);
	\coordinate (P6) at (6,5);
	\coordinate (P7) at (7,9);
	\coordinate (P8) at (8,10);
	\coordinate (P9) at (9,8);
	\coordinate (P10) at (10,11);
	\foreach \pt in {P1, P2, P3, P4, P5, P6, P7, P8, P9, P10}
		\fill (\pt) circle (1.2mm);
	\foreach \a/\b in {P1/P8, P2/P4, P5/P7, P8/P10}
		\draw (\a) -- (\a)|-(\b) -- (\b);
		\draw[red] (P2) circle (5mm);
	\draw[red] (P8) circle (5mm);
		\draw (12,5) node[scale=2] {$\longrightarrow$};
	\coordinate (Q1) at (17,3);
	\coordinate (Q2) at (18,2);
	\coordinate (Q3) at (19,4);
	\coordinate (Q4) at (20,1);
	\coordinate (Q5) at (21,6);
	\coordinate (Q6) at (22,7);
	\coordinate (Q7) at (23,5);
	\coordinate (Q8) at (24,9);
	\coordinate (Q9) at (25,10);
	\coordinate (Q10) at (26,11);
	\coordinate (Q11) at (27,8);
	\coordinate (Q12) at (28,12);
	\foreach \pt in {Q1, Q2, Q3, Q4, Q5, Q6, Q7, Q8, Q9, Q10, Q11, Q12}
		\fill (\pt) circle (1.2mm);
	\foreach \a/\b in {Q1/Q9, Q3/Q5, Q6/Q8, Q10/Q12}
	    \draw (\a) -- (\a)|-(\b) -- (\b);
	\draw[red] (Q3) circle (5mm);
	\draw[red] (Q10) circle (5mm);
	\draw[blue] (Q2) circle (3mm);
	\draw[blue] (Q9) circle (3mm);
	\draw (23, 0) node[scale=2] {$\big\downarrow$};
	\draw (23,-3) node[scale=1] {XXYYXZYXZZYZ};
		\draw (14,-3) node[scale=2] {$\longleftarrow$};
	\draw (5,-2.5) node[scale=1] {XXYYXZYXZZYZ};
	\draw[red] (3.6,-3) node[scale=2] {-};
	\draw[red] (7.9,-3) node[scale=2] {-};
	\draw (28, -4) node {};
	\draw (28,12) node {};
\end{tikzpicture}
\caption{Illustration of $\phi'$ acting on a configuration $C \in \RedVHC_k(\Av_{3k-i}(312))$ as outlined in the proof of \cref{thrm:duck words bij}. First $C$, with points that are both SW endpoints and something else circled in red, is converted into a configuration $C' \in \RedVHC_k(\Av(312))$ now with added points circled in red and original points in smaller blue circles. Next $C'$ is sent to a 3D-Dyck word by $\phi(C')$, and finally the Y's corresponding to the added points are underlined to obtain $\phi'(C)$.} 
\label{fig:phi'}
\end{center}\end{figure}
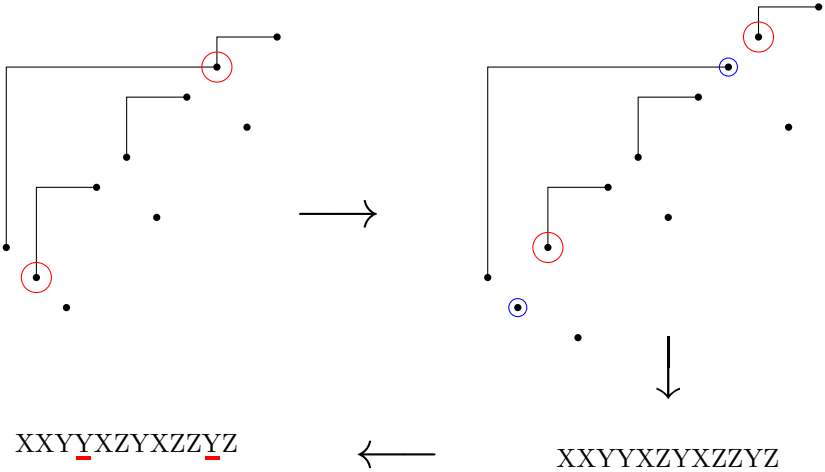

\section{Duck Words}
We now successfully have a way to encode any hook configuration in $\RedVHC_k(\Av(312))$ as an underlined duck word. In trying to count these words, it is useful to express them as a sum of non-underlined 3D-Dyck words in which varying numbers of Y's do or do not have an X immediately before them. This leads to a more intuitive class of combinatorial objects in terms of which we can express $\RedVHC_k(\Av(312))$. We use these words to prove several more properties about 312-avoiding reduced valid hook configurations, including further conjectures of Sankar's. We also note several patterns that arise from counting the words themselves.

\begin{definition}
The \emph{$(k,i)$-Duck words} are elements of $\Dyck^3_k$ in which exactly $i$ Y's do not have an X immediately before them. The number of these words is denoted by $\Duck_{k,i}$.
\end{definition}

The first few values of $\Duck_{k,i}$ are as follows, where $k$ is the row number, and $i$ increases by one from left to right, going from $0$ to $k-1$ 
\medskip

\noindent
$\begin{array}{ccccccc}
1 & \ & \ & \ & \  & \ & \ \\
2 &   3 & \ & \ & \  & \ & \ \\
5  &  23  &   14  & \  & \  & \ & \ \\
14  &  131 &     233   &    84  & \  & \ & \ \\
42  & 664   &  2339 &  2367   &   594  & \ & \ \\
132  & 3166   &   18520   &  36265   &   24714 &   4719 & \ \\
429 & 14545 & 127511 & 408311 & 527757 & 266219 &
 40898
\end{array}$

\begin{proposition}
The $k^{th}$ row sums in the above triangle are given by $$f_k(1) = \sum\limits_{i=0}^{k-1}\Duck_{k,i} = |\Dyck^3_k|.$$
\end{proposition}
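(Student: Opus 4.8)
The plan is to prove the displayed identity by showing that the Duck-triangle row sum $\sum_{i=0}^{k-1}\Duck_{k,i}$ and the relevant value of $f_k$ both equal the $k^{\text{th}}$ 3D-Catalan number $|\Dyck^3_k|$. I would first handle the right-hand equality $\sum_{i=0}^{k-1}\Duck_{k,i}=|\Dyck^3_k|$, which is essentially a bookkeeping statement. By definition $\Duck_{k,i}$ counts the words of $\Dyck^3_k$ in which exactly $i$ of the $k$ Y's are not immediately preceded by an X; since every word of $\Dyck^3_k$ determines such a count uniquely, the classes counted by $\Duck_{k,0},\dots,\Duck_{k,k-1}$ partition $\Dyck^3_k$ and their sizes sum to $|\Dyck^3_k|$. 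The one point to verify is that the range $0\le i\le k-1$ is exhaustive: in any prefix of a word of $\Dyck^3_k$ the number of X's is at least the number of Y's, which is at least the number of Z's, so the initial block before the first Y consists only of X's, forcing the first Y to be immediately preceded by an X. Hence at least one Y is preceded by an X and at most $k-1$ are not, so no class with $i\ge k$ occurs and the partition misses nothing.

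For the remaining equality I would identify the relevant value of $f_k$ with this same 3D-Catalan number. By definition the constant coefficient of $f_k$ is $|\RedVHC_k(\Av_{3k}(312))|$, and the Corollary to \cref{thrm:3Dyck bij} (equivalently \cref{conj: Sankar 1}) already establishes $|\RedVHC_k(\Av_{3k}(312))|=2\frac{(3k)!}{k!(k+1)!(k+2)!}=|\Dyck^3_k|$. Thus the value of $f_k$ featured here agrees with the row sum through their common value $|\Dyck^3_k|$, closing the chain. To exhibit this at the level of coefficients rather than merely through a shared total, I would also propose the sharper route of expanding $f_k$ in powers of $(x+1)$: \cref{thrm:duck words bij} gives $|\RedVHC_k(\Av_{3k-i}(312))|=\underline{\Duck}_{k,i}$, and since an underlined duck word is precisely a word of $\Dyck^3_k$ with a freely chosen subset of its non-X-preceded Y's underlined, a word with $j$ such Y's contributes $\binom{j}{i}$ underlined duck words carrying $i$ underlines. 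This yields $\underline{\Duck}_{k,i}=\sum_{j\ge i}\binom{j}{i}\Duck_{k,j}$ and, after the binomial theorem, the expansion $f_k(x)=\sum_{j=0}^{k-1}\Duck_{k,j}(x+1)^j$, whose specialization making each $(x+1)^j$ equal to $1$ returns exactly $\sum_j\Duck_{k,j}=|\Dyck^3_k|$.

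I expect the main obstacle to lie in this refined route: one must verify that underlining is genuinely an unconstrained choice over exactly the non-X-preceded Y's, so that the 3D-Dyck condition imposes no further restriction, the binomial count $\binom{j}{i}$ is correct, and the inversion to $f_k(x)=\sum_j\Duck_{k,j}(x+1)^j$ is valid. If instead one is content with the shared-total argument, the proof is almost immediate once the partition of the first paragraph is in place and \cref{conj: Sankar 1} is invoked; the agreement with the value of $f_k$ recorded in Section 3 then serves as an independent numerical check against the displayed triangles.
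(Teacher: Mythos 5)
Your first paragraph is, in substance, exactly the paper's proof: the statistic ``number of Y's not immediately preceded by an X'' partitions $\Dyck^3_k$ into the classes counted by $\Duck_{k,0},\ldots,\Duck_{k,k-1}$, and summing the class sizes gives $|\Dyck^3_k|$. The paper states this partition argument in two sentences and stops; your added verification that the range $0\le i\le k-1$ is exhaustive --- the prefix inequalities force the block before the first Y to consist solely of X's and to be nonempty, so the first Y always has an X immediately before it --- is a detail the paper leaves implicit, and it is correct.

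Your second paragraph addresses something the paper's proof does not touch at all, and in doing so you have quietly repaired what is in fact an error in the statement as printed. Taken literally, $f_k(1)=\sum_{i=0}^{k-1}\underline{\Duck}_{k,i}=\sum_{j=0}^{k-1}2^j\Duck_{k,j}$, which is not $|\Dyck^3_k|$: already for $k=2$ one has $f_2(1)=5+3=8$ while $|\Dyck^3_2|=5$. The intended quantity is $h_k(1)=f_k(0)$, consistent with \cref{conj: Sankar 1}, which pairs $f_k(0)=h_k(1)$ with the 3D-Catalan number. Your identification of the relevant value as the constant coefficient $f_k(0)=|\RedVHC_k(\Av_{3k}(312))|=|\Dyck^3_k|$, via the corollary following \cref{thrm:3Dyck bij}, is exactly the right fix; and your sharper route through $\underline{\Duck}_{k,i}=\sum_{j\ge i}\binom{j}{i}\Duck_{k,j}$ and the expansion $f_k(x)=\sum_{j}\Duck_{k,j}(x+1)^j$ anticipates \cref{thrm:underline} and \cref{corollary:polynomail}, which appear later in the section but do not depend on this proposition, so no circularity arises. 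The one change to make: state explicitly that you are proving the corrected identity $h_k(1)=\sum_{i=0}^{k-1}\Duck_{k,i}=|\Dyck^3_k|$, rather than leaving ``the relevant value of $f_k$'' ambiguous.
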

\begin{proof}
Every 3D-Dyck word has $i$ Y's that do not have an X immediately before them for some value of $i$. Summing over all these values of $i$ clearly gives the total number of 3D-Dyck words.
\end{proof}

\begin{proposition}
We have $h_k(0) = \Duck_{k,0}=C_k$.
\end{proposition}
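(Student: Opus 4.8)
The plan is to prove the two equalities in turn: first identify $h_k(0)$ with $\Duck_{k,0}$, and then establish the genuinely combinatorial fact $\Duck_{k,0}=C_k$.

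For the identification $h_k(0)=\Duck_{k,0}$, I would start from \cref{thrm:duck words bij}, which gives $|\RedVHC_k(\Av_{3k-i}(312))|=\underline{\Duck}_{k,i}$, so that $f_k(x)=\sum_i \underline{\Duck}_{k,i}\,x^i$. Every $(k,i)$-underlined duck word arises from a (non-underlined) $(k,j)$-duck word by underlining $i$ of its $j$ Y's that lack a preceding X, whence $\underline{\Duck}_{k,i}=\sum_j \binom{j}{i}\Duck_{k,j}$. Substituting yields $f_k(x)=\sum_j \Duck_{k,j}(1+x)^j$, and therefore $h_k(x)=f_k(x-1)=\sum_j \Duck_{k,j}\,x^j$; in particular $h_k(0)=\Duck_{k,0}$. (If the coefficient identity $h_k(x)=\sum_j \Duck_{k,j}x^j$ has already been recorded in this section, this step is immediate.)

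For the second equality I would exhibit a bijection between $(k,0)$-duck words and $\Dyck_k$. A $(k,0)$-duck word is a word in $\Dyck^3_k$ in which every Y is immediately preceded by an X. Since there are exactly $k$ X's and $k$ Y's and distinct Y's have distinct immediate predecessors, each Y is preceded by its own X and, counting, every X is immediately followed by a Y; thus the X's and Y's occur precisely in adjacent ``XY'' blocks, and the word is a shuffle of $k$ such blocks with the $k$ Z's. I would collapse each XY block to an up-step and read each Z as a down-step, obtaining a word on two symbols with $k$ of each.

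The step that needs care — and the main obstacle — is verifying that the two 3D-Dyck prefix inequalities reduce exactly to the single Dyck condition on the collapsed word. The inequality (number of X's)$\,\ge\,$(number of Y's) holds automatically, since within each block X precedes Y, so any prefix has at most one more X than Y. The inequality (number of Y's)$\,\ge\,$(number of Z's) can only be threatened immediately after a Z; at such a prefix the number of Y's equals the number of completed XY blocks and the number of Z's equals the number of down-steps, so this inequality holds throughout the word if and only if every prefix of the collapsed word has at least as many up-steps as down-steps. That is precisely the Dyck condition, so the map is a bijection onto $\Dyck_k$ and $\Duck_{k,0}=|\Dyck_k|=C_k$. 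Combining the two equalities gives $h_k(0)=\Duck_{k,0}=C_k$.
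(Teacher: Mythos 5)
Your proof of $\Duck_{k,0}=C_k$ is correct and is essentially the paper's argument (pair each Y with the X immediately before it, collapse each XY block to a U and each Z to a D), just with the verification that the two 3D-Dyck prefix inequalities reduce to the single Dyck condition spelled out explicitly. Your derivation of $h_k(0)=\Duck_{k,0}$ via $\underline{\Duck}_{k,i}=\sum_j\binom{j}{i}\Duck_{k,j}$ and $f_k(x)=\sum_j\Duck_{k,j}(1+x)^j$ is also correct; the paper does not address this equality inside the proposition's proof at all, but establishes exactly those identities later in \cref{thrm:underline} and \cref{corollary:polynomail}, so you have simply front-loaded material the paper defers.
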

\begin{proof}
Every X comes exactly before each Y. Writing each word in $\Duck_{k,0}$ with a U to represent each appearance of XY and a D to represent each Z, we get a bijection with Dyck words.
\end{proof}

\begin{proposition}
\label{observation:Defant}
We have $$\Duck_{k,k-1}=|\RedVHC_k(\Av_{2k+1}(312))| = C_kC_{k+2}-C_{k+1}^2.$$
\end{proposition}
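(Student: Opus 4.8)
The plan is to prove the two equalities separately: first the bijective identity $\Duck_{k,k-1}=|\RedVHC_k(\Av_{2k+1}(312))|$, which follows quickly from \cref{thrm:duck words bij}, and then the enumerative identity $\Duck_{k,k-1}=C_kC_{k+2}-C_{k+1}^2$, which is where the real work lies. For the first equality I would use that $2k+1=3k-(k-1)$, so \cref{thrm:duck words bij} gives $|\RedVHC_k(\Av_{2k+1}(312))|=\underline{\Duck}_{k,k-1}$, the number of $(k,k-1)$-underlined duck words. I would then argue $\underline{\Duck}_{k,k-1}=\Duck_{k,k-1}$: the first $Y$ of any word in $\Dyck^3_k$ is preceded only by $X$'s, hence always has an $X$ immediately before it, so at most $k-1$ of the $k$ $Y$'s are $X$-free. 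A $(k,k-1)$-underlined duck word requires $k-1$ underlined (hence $X$-free) $Y$'s, which forces the underlying word to have exactly $k-1$ $X$-free $Y$'s --- that is, to be a $(k,k-1)$-Duck word --- with every $X$-free $Y$ underlined. Since the underlining is then forced, erasing the underlines is a bijection, yielding the first equality.

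For the second equality I would first translate the defining condition. Since exactly $k-1$ of the $k$ $Y$'s are $X$-free, exactly one $Y$ is preceded immediately by an $X$; as the first $Y$ is always $X$-preceded, the content is that the substring $XY$ occurs \emph{exactly once}, at the first $Y$. Equivalently, after an initial block $X^{a}$ with $a\ge 1$, every maximal run of $X$'s is immediately followed by a $Z$ (a terminal $X$ is impossible because the last letter of a 3D-Dyck word is $Z$). I would encode such a word by its skeleton $YZ(w)\in\Dyck_k$, a Dyck word in the letters $Y$ (up) and $Z$ (down), together with the placement of the $k$ $X$'s, which may go only into the gap before the first $Y$ or into the gaps immediately preceding $Z$'s. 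The surviving constraint is the ballot condition that $\#X\ge\#Y$ at every prefix (the condition $\#Y\ge\#Z$ is automatic once $YZ(w)$ is a Dyck word), and this is tightest exactly at each $Y$. Writing $N(D)$ for the number of admissible $X$-placements on a skeleton $D$, summing $N(D)$ over all $D\in\Dyck_k$ then computes $\Duck_{k,k-1}$.

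The remaining task, and the main obstacle, is to evaluate $\sum_D N(D)$ and show it equals the $2\times 2$ determinant $\det\begin{pmatrix}C_k & C_{k+1}\\ C_{k+1}& C_{k+2}\end{pmatrix}=C_kC_{k+2}-C_{k+1}^2$. The natural tool is Lindström--Gessel--Viennot: I would read the skeleton-plus-insertion data as a pair of monotone lattice paths --- one recording the $Y/Z$ Dyck structure and one recording the cumulative $X$-counts --- and show that the ballot condition is precisely the non-crossing condition, so that the total is an LGV determinant of Catalan numbers. Equivalently, I would seek a tail-swapping bijection establishing $\Duck_{k,k-1}+C_{k+1}^2=C_kC_{k+2}$, matching the crossing path pairs (counted by $C_{k+1}^2$) against the defect in pairing Dyck paths of semilengths $k$ and $k+2$. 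The delicate point is to set up the two-path model and the endpoint shifts so that admissible $X$-insertions correspond to non-crossing pairs with exactly the endpoints giving $C_k,\,C_{k+1},\,C_{k+2}$, rather than some other Catalan indices. Should the clean bijection prove elusive, a safe fallback is to derive a recurrence for $\sum_D N(D)$ from the first-return decomposition of the skeleton and to check that $C_kC_{k+2}-C_{k+1}^2$ satisfies the same recurrence with the same initial values, the latter verification being routine from the Catalan convolution identity.
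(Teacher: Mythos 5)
Your treatment of the first equality is correct and is essentially the paper's argument: $2k+1=3k-(k-1)$, so \cref{thrm:duck words bij} gives $|\RedVHC_k(\Av_{2k+1}(312))|=\underline{\Duck}_{k,k-1}$, and since the first $Y$ of any word in $\Dyck^3_k$ is always preceded by an $X$, a word admitting $k-1$ underlines must have exactly $k-1$ $X$-free $Y$'s, all of them underlined; the underlining is forced and $\underline{\Duck}_{k,k-1}=\Duck_{k,k-1}$. (The paper phrases the forcing on the hook-configuration side --- every SW endpoint after the first is also a NE endpoint or descent bottom --- but it is the same observation.)

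The second equality is where your proposal has a genuine gap. The paper does not prove $\Duck_{k,k-1}=C_kC_{k+2}-C_{k+1}^2$ combinatorially at all: it invokes Sankar's bijection between $\RedVHC_k(\Av_{2k+1}(312))$ and the $312$-avoiding uniquely sorted permutations of $S_{2k+1}$, and then cites Corollary 5.1 of Defant's paper on Catalan intervals, which enumerates those permutations by $C_kC_{k+2}-C_{k+1}^2$. You instead propose a direct evaluation of $\sum_D N(D)$ over skeletons $D\in\Dyck_k$ via Lindstr\"om--Gessel--Viennot, a tail-swapping involution, or a recurrence --- but none of these is carried out. The two-path model is not specified (which endpoints yield the indices $k$, $k+1$, $k+2$ rather than some other Catalan shifts is exactly the point you flag as ``delicate'' and then leave open), the tail-swap is not constructed, and the recurrence is neither derived nor checked. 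This is not a routine omission: the paper explicitly lists finding such a direct argument (via a bijection with first-octant lattice walks) as an open problem, precisely because it would give ``a simpler and more direct proof'' than the citation it relies on. Your structural reformulation --- that a $(k,k-1)$-Duck word is a 3D-Dyck word in which $XY$ occurs exactly once, at the first $Y$, so the $X$'s occupy only the initial gap and gaps immediately before $Z$'s, subject to the prefix condition $\#X\ge\#Y$ --- is correct and is a sensible starting point, but as written the proof of the determinant identity is missing. Either complete one of your three strategies or, as the paper does, reduce to the known enumeration of $312$-avoiding uniquely sorted permutations and cite that result.
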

\begin{proof}
By \cref{thrm:duck words bij}, we know $|\RedVHC_k(\Av_{2k+1}(312))| = \underline{\Duck}_{k,k-1}$. \\
For any $C \in |\RedVHC_k(\Av_{2k+1}(312))|$, every SW endpoint after the first is also either a NE endpoint or descent bottom. Thus \cref{thrm:duck words bij} tells us that in $\phi'(C)$ the corresponding Y has either a Z or another Y before it, so we also have $|\RedVHC_k(\Av_{2k+1}(312))|=\Duck_{k,k-1}$. The second equality follows as a result of Corollary 5.1 of \cite{catalan} because, as Sankar noted in \cite{maya}, there is a natural bijection between the set $\RedVHC_k(\Av_{2k+1}(312))$ and the set of $312$-avoiding uniquely sorted permutations in $S_{2k+1}$.
\end{proof}

\begin{theorem}
\label{thrm:underline}
We have \[ |\RedVHC_k(\Av_{3k-i}(312))|=\underline{\Duck}_{k,i}=\sum\limits_{j=i}^{k-1}\Duck_{k,i}\cdot \binom{j}{i}. \]
\end{theorem}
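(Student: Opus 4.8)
The plan is to treat the two equalities separately. The first, $|\RedVHC_k(\Av_{3k-i}(312))|=\underline{\Duck}_{k,i}$, is exactly the content of \cref{thrm:duck words bij}, which provides a bijection between $312$-avoiding reduced valid hook configurations with $k$ hooks on $3k-i$ points and $(k,i)$-underlined duck words. So nothing new is required there, and the work is entirely in establishing the second equality
$$\underline{\Duck}_{k,i}=\sum_{j=i}^{k-1}\Duck_{k,j}\binom{j}{i}$$
(where the summand is the $(k,j)$-duck count). I would prove this by a straightforward ``choose-a-subset'' count obtained by forgetting the underlines.

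First I would observe that a $(k,i)$-underlined duck word is nothing more than an underlying word $w\in\Dyck^3_k$ together with a choice of exactly $i$ of its Y's to underline, subject to the defining constraint that an underlined Y is never immediately preceded by an X. Hence the Y's that are eligible to be underlined in $w$ are precisely those not immediately preceded by an X; writing $j=j(w)$ for the number of such Y's in $w$, the number of $(k,i)$-underlined duck words with underlying word $w$ is exactly $\binom{j}{i}$. Grouping the words of $\Dyck^3_k$ according to their value of $j$, and recalling that by definition there are $\Duck_{k,j}$ words with a given value of $j$, I would conclude
$$\underline{\Duck}_{k,i}=\sum_{w\in\Dyck^3_k}\binom{j(w)}{i}=\sum_{j}\Duck_{k,j}\binom{j}{i}.$$
Finally, since $\binom{j}{i}=0$ whenever $j<i$, and since the number of Y's not immediately preceded by an X ranges between $0$ and $k-1$ (the first letter of any word in $\Dyck^3_k$ is an X, matching the range $0\le i<k$ recorded for reduced valid hook configurations), the summation may be restricted to $i\le j\le k-1$, giving the stated identity.

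I do not expect a serious obstacle here, as this is a routine binomial double-count; the only points that require care are verifying that the set of underlinable Y's is exactly the set of Y's with no preceding X (this is immediate from the definition of underlined duck words) and correctly pinning down the summation range, namely that $j$ cannot exceed $k-1$. If desired, the identity can instead be read backwards as a bijection: from a $(k,j)$-duck word one produces $\binom{j}{i}$ underlined words by selecting which $i$ of its $j$ non-X-preceded Y's to underline, and every $(k,i)$-underlined duck word arises uniquely this way.
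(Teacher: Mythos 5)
Your proposal is correct and follows essentially the same argument as the paper: group the $(k,i)$-underlined duck words by their underlying $3$D-Dyck word, note that a word with exactly $j$ Y's not immediately preceded by an X admits exactly $\binom{j}{i}$ valid underlinings, and sum over $j$. You are also right that the summand should read $\Duck_{k,j}$ rather than $\Duck_{k,i}$ (a typo in the statement), and your justification that $j\le k-1$ (the first Y is always preceded by an X) is a small point the paper leaves implicit.
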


\begin{proof}
Every word in $\underline{\Duck}_{k,i}$ must have at least $i$ Y's without an X before them. For each $j>i$ we can take a word in which exactly $j$ Y's do not have an X before them and choose $i$ of those Y's to underline. Summing over all the possible $j$'s gives all possible underlined duck words.
\end{proof}

Writing out each $\underline{\Duck}_{k,i}$ as a sum of the $\Duck_{k,j}$'s, we get the following set of equations:

\bigskip

\noindent
\begin{equation}
\begin{array}{cccccccc}
\underline{\Duck}_{k,0}   & = & \binom{0}{0}\Duck_{k,0}  \ \ \   + &  \binom{1}{0}\Duck_{k,1} \ \ \  + & \binom{2}{0}\Duck_{k,2} & \   + \ \cdots \ + & \binom{k-1}{0}\Duck_{k,k-1} \vspace{2mm}\\ 
\underline{\Duck}_{k,1}   & = &   & \binom{1}{1}\Duck_{k,1} \ \   + & \binom{2}{1}\Duck_{k,2} & \ + \ \cdots \ + & \binom{k-1}{1}\Duck_{k,k-1} \vspace{2mm}\\
\underline{\Duck}_{k,2}   & = &   &  & \binom{2}{2}\Duck_{k,2} & \  + \ \cdots \ + & \binom{k-1}{2}\Duck_{k,k-1} \vspace{2mm}\\
\vdots   & = &   &  &  &    & \vdots \vspace{2mm}\\
\underline{\Duck}_{k,k-1}   & = &   &  &  &    & \binom{k-1}{k-1}\Duck_{k,k-1} \vspace{2mm}.\\
\end{array}
\label{eqn:duck}
\end{equation} \\

\begin{corollary}
\label{corollary: sum of RedVHC_k}
We have $|\RedVHC_k(\Av(312))|= \sum\limits_{j=0}^{k-1}2^i \cdot \Duck_{k,j}$.
\end{corollary}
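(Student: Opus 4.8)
The plan is to decompose $\RedVHC_k(\Av(312))$ according to the number of points of the underlying permutation and then convert each piece into a sum of $\Duck_{k,j}$'s by means of \cref{thrm:underline}. First I would observe that, as recorded in Section~2, a $312$-avoiding reduced valid hook configuration with $k$ hooks has exactly $3k-i$ points for some integer $i$ with $0 \le i \le k-1$, and every one of these point counts actually occurs. Hence the set $\RedVHC_k(\Av(312))$ is the disjoint union over $i$ of the sets $\RedVHC_k(\Av_{3k-i}(312))$, which gives
\[
|\RedVHC_k(\Av(312))| = \sum_{i=0}^{k-1} |\RedVHC_k(\Av_{3k-i}(312))|.
\]

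Next I would apply \cref{thrm:underline}, which expresses each term as $|\RedVHC_k(\Av_{3k-i}(312))| = \sum_{j=i}^{k-1} \binom{j}{i}\Duck_{k,j}$, to obtain the double sum
\[
|\RedVHC_k(\Av(312))| = \sum_{i=0}^{k-1}\sum_{j=i}^{k-1}\binom{j}{i}\Duck_{k,j}.
\]
The key manipulation is to reverse the order of summation. The index set is the triangle $\{(i,j) : 0 \le i \le j \le k-1\}$, so summing over $i$ first for each fixed $j$ yields
\[
|\RedVHC_k(\Av(312))| = \sum_{j=0}^{k-1}\Duck_{k,j}\sum_{i=0}^{j}\binom{j}{i} = \sum_{j=0}^{k-1} 2^j \,\Duck_{k,j},
\]
where the final equality uses the elementary identity $\sum_{i=0}^{j}\binom{j}{i} = 2^j$. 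This is the claimed formula (note that the exponent should read $2^j$ to match the summation variable).

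Since every step is elementary once \cref{thrm:underline} is available, there is no genuine obstacle here; the only point deserving care is tracking the summation bounds correctly when swapping the two sums, so that the triangular region $0 \le i \le j \le k-1$ is preserved. I would also record a purely combinatorial alternative that bypasses the binomial identity entirely: the quantity $\sum_{j} 2^j \Duck_{k,j}$ counts the pairs consisting of a word in $\Dyck^3_k$ together with a chosen subset of those Y's that have no X immediately before them (there being $2^j$ such subsets for a word with exactly $j$ such Y's). These pairs are precisely the underlined duck words of length $3k$, whose total count is $\sum_i \underline{\Duck}_{k,i} = |\RedVHC_k(\Av(312))|$ by \cref{thrm:duck words bij}, giving the identity directly.
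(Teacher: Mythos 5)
Your proof is correct and follows essentially the same route as the paper's: sum $|\RedVHC_k(\Av_{3k-i}(312))|=\underline{\Duck}_{k,i}$ over $i$, apply \cref{thrm:underline}, swap the order of summation, and invoke $\sum_{i=0}^{j}\binom{j}{i}=2^j$ (and you are right that the exponent in the statement should read $2^j$, not $2^i$). Your closing bijective remark about choosing a subset of the eligible Y's is a nice direct alternative, but the core argument matches the paper.
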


\begin{proof}
Observe that
\begin{align*}
|\RedVHC_k(\Av(312))| &=\sum_{i=0}^{k-1}\underline{\Duck}_{k,i} \\
&= \sum_{i=0}^{k-1} \sum_{j=i}^{k-1} \binom{j}{i}\Duck_{k,j} = \sum_{j=0}^{k-1} \sum_{i=0}^{j} \binom{j}{i}\Duck_{k,j} \\
&= \sum_{j=0}^{k-1}2^i \cdot \Duck_{k,j}. \qedhere
\end{align*}
\end{proof}

We now use a similar method to prove \cref{conj: Sankar 2}.

\begin{corollary} \label{corollary:Sankar 2}
We see that $f_k(-1) = \sum\limits_{i=0}^{k-1}(-1)^i |\RedVHC_k(\Av_{3k-i}(312))|= C_k$.
\end{corollary}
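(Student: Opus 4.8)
The plan is to mirror the computation carried out in \cref{corollary: sum of RedVHC_k}, replacing the weights $2^i$ that came from expanding $(1+1)^j$ with the alternating weights $(-1)^i$ that arise from evaluating $(x-1)^j$ at $x=0$. The engine is \cref{thrm:underline}, which already writes each term appearing in $f_k(-1)$ as a binomial combination of the $\Duck_{k,j}$.

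First I would substitute $|\RedVHC_k(\Av_{3k-i}(312))|=\sum_{j=i}^{k-1}\binom{j}{i}\Duck_{k,j}$ from \cref{thrm:underline} into the definition $f_k(-1)=\sum_{i=0}^{k-1}(-1)^i|\RedVHC_k(\Av_{3k-i}(312))|$, producing a double sum. Then I would interchange the order of summation, pulling $\Duck_{k,j}$ to the outside and grouping the binomial coefficients into an inner sum over $i$ ranging from $0$ to $j$:
\[
f_k(-1)=\sum_{i=0}^{k-1}(-1)^i\sum_{j=i}^{k-1}\binom{j}{i}\Duck_{k,j}
=\sum_{j=0}^{k-1}\Duck_{k,j}\sum_{i=0}^{j}(-1)^i\binom{j}{i}.
\]

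The key step is evaluating the inner alternating sum. By the binomial theorem, $\sum_{i=0}^{j}(-1)^i\binom{j}{i}=(1-1)^j$, which equals $0$ for every $j\geq 1$ and equals $1$ when $j=0$. Consequently every term with $j\geq 1$ is annihilated and the double sum collapses to its single surviving summand $\Duck_{k,0}$. Finally I would invoke the proposition established just above, which gives $\Duck_{k,0}=C_k$, completing the proof and settling \cref{conj: Sankar 2}.

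I do not expect a genuine obstacle here: once \cref{thrm:underline} is available the argument is the standard vanishing-alternating-binomial-sum trick, structurally identical to \cref{corollary: sum of RedVHC_k}. The only point requiring care is the range of the inner sum after switching the order of summation --- it must run from $i=0$ up to $j$, not up to $k-1$ --- since it is precisely this truncation that makes the clean identity $\sum_{i=0}^{j}(-1)^i\binom{j}{i}=(1-1)^j$ applicable and forces the collapse to the $j=0$ term.
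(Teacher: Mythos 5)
Your proposal is correct and follows essentially the same route as the paper's proof: substitute the expression for $|\RedVHC_k(\Av_{3k-i}(312))|$ from \cref{thrm:underline}, interchange the order of summation so the inner sum runs over $0\leq i\leq j$, and collapse via $\sum_{i=0}^{j}(-1)^i\binom{j}{i}=(1-1)^j$, leaving only the $j=0$ term $\Duck_{k,0}=C_k$. No gaps.
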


\begin{proof}
We have
\begin{align*}
\sum_{i=0}^{k-1}(-1)^i |\RedVHC_k(\Av_{3k-i}(312))| &=\sum_{i=0}^{k-1}(-1)^i \underline{\Duck}_{k,i} \\
&= \sum_{i=0}^{k-1} \sum_{j=i}^{k-1}(-1)^i \binom{j}{i}\Duck_{k,j}
= \sum_{j=0}^{k-1} \sum_{i=0}^{j} (-1)^i \binom{j}{i}\Duck_{k,j} \\
&= \binom{0}{0}\Duck_{k,0}+ \sum_{j=1}^{k-1}0 \cdot \Duck_{k,j} = C_k.
\ \ \ \qedhere
\end{align*}
\end{proof}

Also note that Corollaries \ref{corollary: sum of RedVHC_k} and \ref{corollary:Sankar 2} can be clearly seen to be true by taking the sum or alternating sum respectively of the elements in each column of \cref{eqn:duck}.

In addition to finding specific values, a combinatorial interpretation for the coefficients of $h_k(x)$ follows as a result of \cref{thrm:underline}.

\begin{corollary}
\label{corollary:polynomail}
$h_k(x)= \sum\limits_{i=0}^{k-1}\Duck_{k,i} \cdot x^i$.
\end{corollary}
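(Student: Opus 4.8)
The plan is to deduce the formula for $h_k(x)$ directly from the definition $h_k(x)=f_k(x-1)$ together with the combinatorial identity established in \cref{thrm:underline}. Recall that $f_k(x)=\sum_{i=0}^{k-1}|\RedVHC_k(\Av_{3k-i}(312))|\,x^i$, and that \cref{thrm:underline} gives $|\RedVHC_k(\Av_{3k-i}(312))|=\underline{\Duck}_{k,i}=\sum_{j=i}^{k-1}\binom{j}{i}\Duck_{k,j}$. So the first step is simply to substitute $x-1$ for $x$ in $f_k$, writing
\[
h_k(x)=f_k(x-1)=\sum_{i=0}^{k-1}\underline{\Duck}_{k,i}\,(x-1)^i.
\]

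Next I would expand each $(x-1)^i$ by the binomial theorem and collect terms. The cleanest route, however, is to substitute the sum from \cref{thrm:underline} and swap the order of summation, which converts the problem into a binomial-coefficient identity. Concretely,
\[
h_k(x)=\sum_{i=0}^{k-1}\Bigl(\sum_{j=i}^{k-1}\binom{j}{i}\Duck_{k,j}\Bigr)(x-1)^i
=\sum_{j=0}^{k-1}\Duck_{k,j}\sum_{i=0}^{j}\binom{j}{i}(x-1)^i,
\]
and then the inner sum is exactly the binomial expansion $\sum_{i=0}^{j}\binom{j}{i}(x-1)^i=\bigl((x-1)+1\bigr)^j=x^j$. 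This collapses the double sum to $\sum_{j=0}^{k-1}\Duck_{k,j}\,x^j$, which is the claimed formula (after relabeling the index $j$ as $i$).

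The only genuine step requiring care is the interchange of the two finite summations, which is valid because the sum is finite, and the recognition that the inner sum is a complete binomial expansion rather than a truncated one — this works precisely because for each fixed $j$ the index $i$ runs over the full range $0\le i\le j$. The main obstacle, such as it is, is bookkeeping: making sure the summation bounds line up so that $i$ ranges over all of $\{0,\dots,j\}$ after the swap, matching the binomial theorem exactly. Since this is the same algebraic maneuver already used in the proofs of \cref{corollary: sum of RedVHC_k} and \cref{corollary:Sankar 2} (which are the specializations $x=2$ and $x=0$ of this very identity), I expect the proof to be a short few lines with no substantive difficulty.
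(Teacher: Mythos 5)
Your proposal is correct and uses essentially the same argument as the paper: the paper verifies $\sum_{i}\Duck_{k,i}(x+1)^i=f_k(x)$ via \cref{thrm:underline}, a binomial expansion, and a swap of the two finite sums, which is exactly your computation read in the opposite direction (substituting $x-1$ rather than $x+1$). No gaps; the inner sum does run over the full range $0\le i\le j$, so the binomial theorem applies as you say.
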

\begin{proof}
We need only show that $\sum\limits_{i=0}^{k-1}\Duck_{k,i} \cdot (x+1)^i=f_k(x)$. This follows from

\begin{align*}
\sum_{i=0}^{k-1} \Duck_{k,i} \cdot (x+1)^i &= \sum\limits_{i=0}^{k-1}\sum\limits_{j=1}^{i}\Duck_{k,i}\cdot \binom{i}{j} \cdot x^j \\
&=\sum\limits_{j=0}^{k-1}\sum\limits_{i=j}^{k-1}\Duck_{k,i}\cdot \binom{i}{j} \cdot x^j = \sum\limits_{j=0}^{k-1}\underline{\Duck}_{k,j}\cdot x^j\\
&= f_k(x) = h_k(x+1).
\qedhere
\end{align*}
\end{proof}

\cref{corollary:polynomail} actually proves a stronger version of \cref{conj: Sankar 3}, which was that $h_k(x)$ has strictly positive coefficients. \cref{corollary:polynomail} gives a nice combinatorial interpretation for the coefficients of $h_k(x)$ that is simpler than that of the coefficients of $f_k(x)$.

\subsection{Rewritten Duck Words}
\label{section:rewritten}

Without losing any information we can underline each Y without an X immediately before it, and delete the X immediately before each non-underlined Y. Now we just rewrite each Y as a U and each Z as a D, leaving the underlines where they were. We replace each X with a circle around the first non-X letter that  comes after it. That letter will never be an underlined U. This results in a rewriting of our duck word which we can ``decode" to get the original form back.

For example, following this rewriting process, 

 XYXYZXXYZYZ \ $\longrightarrow$ \ YYZXYZ\underline{Y}Z \ $\longrightarrow$ UUDXUD\underline{U}D $\longrightarrow$ UUD\textcircled{U}D\underline{U}D.

\begin{definition}
We call the words rewritten according to the rule above \textbf{rewritten $\bf{(k,i)}$-duck words}.
\end{definition}

The name ``duck words" comes from the rewriting which replaces Y's with U's.

Using this method, the set of rewritten $(k,i)$-duck words is exactly the set of words in $\Dyck_k$ with $i$ underlined Y's and $i$ circles around non-underlined letters such that up to any point in the word the number of circles is greater than or equal to the number of underlines. Letters can have more than one circle around them.

In general, the rewritten duck words may not seem like much of a simplification over duck words in the normal form, but in the case of small $i$'s it can be more simple to consider the rewritten form. We will do this specifically for $\Duck_{k,1}$ in the following section.

\subsection{Weighted Tennis Ball Numbers}

We next show that a weighted version of the tennis ball numbers described by Merlini, Sprugnoli and Verri in \cite{TB} are equal to the numbers $\Duck_{k,1}$. They provide the following description of the tennis ball numbers:

\noindent Put tennis balls 1 and 2 in a room. Throw one of them at random onto the lawn. \\
Put tennis balls 3 and 4 into the room. Throw one of the three tennis balls now in the room onto the lawn. \\
Put tennis balls 5 and 6 in the room. Throw one of the tennis balls from the room onto the lawn. \\
Repeat until you have thrown tennis balls $2k-1$ and $2k$ into the room and then thrown a tennis ball from the room onto the lawn, resulting in a total of $k$ balls on the lawn.

\begin{definition}
Let $TB_k$ be the set of possible ball configurations on the lawn as described by \cite{TB}. The $k^{th}$ tennis ball number is $|TB_k|$, and the $k^{th}$ weighted tennis ball number, denoted $tb_k$, is the sum of the value of the balls on the lawn over all possible configurations in $TB_k$. It is known that $tb_k$ is given by the formula $	
\frac{(2n^2 + 5n + 4)\binom{2n+1}{n}}{(n+2)} - 2^{2n+1}$ \cite{tn}.
\end{definition}

Grimaldi and Moser \cite{tennisball} proved that the set $TB_{k-1}$ is in bijection with Dyck words of length $2k$. We can think of this bijection as a map $\psi: TB_{k-1} \rightarrow \Dyck_k$ which makes the $(i+1)^{st}$ letter a U exactly when the $i^{th}$ tennis ball is on the lawn. For example, if there are six balls total and the ball configuration on the lawn is $a=\{2,3,5\}$, then $\psi(a) =$ UDUUDUDD. We will use this bijection to show that $\Duck_{k,1}$ is given by $tb_{k-1}$.

\begin{proposition}
\label{proposion:tb_k}
We have $\Duck_{k,1} = tb_{k-1}$.
\end{proposition}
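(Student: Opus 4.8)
The plan is to establish the equality $\Duck_{k,1} = tb_{k-1}$ by exhibiting an explicit bijection between the $(k,1)$-duck words and the ball configurations in $TB_{k-1}$, with the bijection arranged so that the total weight (sum of ball values) on the tennis-ball side corresponds to a count of $(k,1)$-duck words. First I would pass to the rewritten form of duck words developed in \cref{section:rewritten}: a rewritten $(k,1)$-duck word is an element of $\Dyck_k$ together with exactly one underlined $U$ and exactly one circle around a (non-underlined) letter, subject to the constraint that up to any point the number of circles is at least the number of underlines. For $i=1$ this constraint says simply that the circled letter must occur strictly before the underlined $U$ in the word. So a rewritten $(k,1)$-duck word is precisely a pair consisting of a Dyck word $D \in \Dyck_k$, a choice of one of its letters to circle, and a choice of one of its $U$'s to underline, with the circled position strictly to the left of the underlined position.

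Next I would use Grimaldi and Moser's bijection $\psi : TB_{k-1} \to \Dyck_k$ recalled just above the statement. The key observation I expect to exploit is that the weighted tennis ball number $tb_{k-1}$ is, by definition, the sum over all configurations $a \in TB_{k-1}$ of the sum of the ball values in $a$, and that under $\psi$ a ball value translates into a positional statistic on the Dyck word $\psi(a)$ (the $i$th ball being on the lawn corresponds to the $(i+1)$st letter being a $U$). Thus $tb_{k-1} = \sum_{a} \sum_{\text{balls } b \in a} (\text{value of } b)$ should be re-expressible as a sum over Dyck words $D$ of a quantity that counts, for each $U$ in $D$, the number of positions strictly to its left. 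I would then match this count against the number of ways to choose a circled letter to the left of a chosen underlined $U$, thereby identifying $\sum_{D} \sum_{U} (\#\text{positions left of } U)$ with $\Duck_{k,1}$ in rewritten form, and simultaneously with $tb_{k-1}$.

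The main obstacle will be the bookkeeping that converts the weight $\sum_{b \in a}(\text{value of } b)$ into exactly the left-of-a-$U$ count, since the tennis-ball values are indexed $1$ through $2k$ while the Dyck-word positions are indexed $1$ through $2k$ with a shift, and one must check that summing ball values reproduces precisely the ``choose a circled position strictly before the underlined $U$'' count rather than an off-by-one or off-by-a-constant variant. I would handle this by writing, for a fixed configuration $a = \{b_1 < b_2 < \cdots\}$ with $\psi(a) = D$, the value of each ball $b_j$ in terms of the position of the corresponding $U$ in $D$, summing over $j$, and then summing over $a$; the strict-inequality constraint from the rewritten-word description (circle strictly before underline) is exactly what makes the two counts coincide, so verifying that matchup is the crux. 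Once the positional statistics are aligned, the equality $\Duck_{k,1} = \sum_{D \in \Dyck_k} \sum_{U \in D} (\#\{\text{positions before } U\}) = tb_{k-1}$ follows immediately, completing the proof.
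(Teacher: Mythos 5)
Your proposal is correct and follows essentially the same route as the paper's proof: both pass to the rewritten form of $(k,1)$-duck words (a Dyck word with one circled letter strictly before one underlined U), apply the Grimaldi--Moser bijection $\psi$, and match each ball value with the number of letters preceding the corresponding U. The off-by-one bookkeeping you identify as the crux is exactly the identity $j_i = B_{\psi(a)}(U_{i+1})$ that the paper verifies.
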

\begin{proof}
For each ball configuration $a \in TB_{k-1}$ we can consider the sum of the ball numbers, which is the amount it contributes to $tb_{k-1}$. Call this number $C_{TB}(a)$. Similarly for the Dyck path $\psi(a) \in \Dyck_k$ we can consider the total number of ways to underline one of the U's and circle a letter before it, which gives its contribution to $\Duck_{k,1}$, which we will denote by $C_{Dyck}(\psi(a))$. We will show that for $a \in TB_{k-1}$ and $\psi(a) \in \Dyck_k$, we have $C_{TB}(a)=C_{Dyck}(\psi(a))$.

Let $B_{\psi(a)}(U_i)$ be the number of letters before the $i^{th}$ U in the word $\psi(a)$. For $i \in [k]$, the number of ways to underline the $(i+1)^{st}$ U in $\psi'(a)$ and circle a letter before it is exactly $B_{\psi(a)}(U_{i+1})$. Then $$C_{Dyck}(\psi(a))=\sum\limits_{i=1}^{k-1} B_{\psi(a)}(U_{i+1}).$$
For $j_i$ the number on the $i^{th}$ ball in $a$, we know $\psi(a)$ will put the $(i+1)^{st}$ U in the $(j_i+1)^{st}$ spot in the word. Thus we have that $j_i=B_{\psi(a)}(U_{i+1})$. Summing over all balls in $a$ gives us $$C_{TB}(a)=\sum\limits_{i=1}^{k-1}j_i=\sum\limits_{i=1}^{k-1} B_{\psi(a)}(U_{i+1}).$$

Summing over the contributions from every ball arrangement and word, we conclude that

$$tb_{k-1}=\sum\limits_{a \in TB_{k-1}}C_{TB}(a)=\sum\limits_{\psi(a) \in \Dyck_k}C_{Dyck}(\psi(a))=\Duck_{k,1}. \qedhere$$
\end{proof}

The linear term of $h_k(x)$ is $\Duck_{k,1}$, so \cref{proposion:tb_k} proves \cref{conj: Sankar 4}.

\section{Future Work}
In this paper we expressed 312-avoiding reduced valid hook configurations in terms of underlined and regular duck words. We also showed that these give the coefficients of $f_k(x)$ and $h_k(x)=f_k(x-1)$ respectively. We evaluated $f_k(x)$ for $x=-1,0,1$ and $h_k(x)$ for $x=0,1,2$, as well as found the constant and leading order terms $f_k(x)$ and $g_k(x)$, and the linear term of $h_k(x)$. There are still many directions open to pursue. We pose several conjectures and open problems.

\begin{Problem}
Find the values of $f_k(x)$ and $g_k(x)$, or combinatorial interpretations of these numbers, for further values of $x$.
\end{Problem}

\begin{conjecture}[Sankar \cite{maya}]
For each positive integer $k$, the polynomial $h_k(x)$ (and therefore $f_k(x)$) has all real roots.
\end{conjecture}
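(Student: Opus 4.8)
The plan is to prove real-rootedness by the standard route for combinatorially defined polynomials: produce a recurrence for the family $\{h_k(x)\}_{k\ge 1}$ and propagate real-rootedness through it by an interlacing (Sturm-sequence) argument. Recall from \cref{corollary:polynomail} that $h_k(x)=\sum_{i=0}^{k-1}\Duck_{k,i}\,x^i$, so all coefficients are positive and it is equivalent to show that every root is a negative real number.

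First I would look for a decomposition of 3D-Dyck words that is compatible with the statistic ``number of Y's not immediately preceded by an X.'' The cleanest candidates are: peeling off the last letter (which is always a Z, since a final X or Y would violate $\#X\ge\#Y\ge\#Z$ on the penultimate prefix) together with its associated Y and X; a first-return decomposition along the Z-level; or, using the bijection between $\Dyck^3_k$ and standard Young tableaux of the $3\times k$ rectangle, removing the last column. The goal of this step is a recurrence of the shape $h_k(x)=a_k(x)\,h_{k-1}(x)+b_k(x)\,h_{k-2}(x)$ (or a bounded-order analogue) in which $a_k,b_k$ have nonnegative coefficients and controlled degree; each such decomposition must record how many new ``Y-not-after-X'' events it creates, which is exactly what turns the enumeration into a polynomial identity in $x$.

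Given such a recurrence, the second step is routine. One shows by induction on $k$ that $h_{k-1}(x)$ and $h_k(x)$ have interlacing real roots, writing $g\preceq f$ for interlacing. The engine is the classical linear-combination lemma (Hermite--Biehler / Wagner): if $f$ and $g$ are real-rooted with $g\preceq f$ and $a(x),b(x)$ satisfy the appropriate sign conditions, then $a(x)f(x)+b(x)g(x)$ is real-rooted and interlaces $f$. The base cases $h_1,h_2$ are immediate from the triangle of $\Duck_{k,i}$ values, and the positivity of the coefficients of $a_k,b_k$ established combinatorially in the first step is precisely what feeds the sign hypotheses of the lemma.

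The hard part is the first step. Because the generating function of the $\Duck_{k,i}$ is not D-finite, one should not expect a finite-order linear recurrence in $k$ with polynomial coefficients; a naive single-letter peeling is likely to produce a recurrence whose order or coefficient degrees grow, which breaks the clean interlacing induction. My fallback would therefore be to bypass the recurrence entirely and prove directly that the sequence $(\Duck_{k,i})_{i=0}^{k-1}$ is a P\'olya frequency sequence, i.e.\ that its associated Toeplitz matrix is totally nonnegative; by the Aissen--Schoenberg--Whitney theorem this forces $h_k(x)$ to have only real (hence negative) zeros. Total nonnegativity is well suited to a combinatorial attack via the Lindström--Gessel--Viennot lemma: I would try to realize the minors of the Toeplitz matrix as counts of nonintersecting lattice-path families built from the 3D-Dyck/tableau model, so that nonnegativity is automatic. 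Whether the ``Y-not-after-X'' statistic admits such a path model is the crux on which either approach ultimately rests.
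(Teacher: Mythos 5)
This statement is not proved in the paper: it appears in the Future Work section precisely because it is an open conjecture (attributed to Sankar), and the paper only establishes the weaker facts that the coefficients $\Duck_{k,i}$ of $h_k(x)$ are positive (\cref{corollary:polynomail}) and evaluates $h_k$ at a few special points. Your proposal likewise does not prove it. What you have written is a strategy survey, not an argument: both of your routes terminate at exactly the point where the real mathematical content would have to begin. For the recurrence-plus-interlacing route, you never produce a recurrence of the form $h_k=a_k h_{k-1}+b_k h_{k-2}$; you list three candidate decompositions of 3D-Dyck words without verifying that any of them is compatible with the statistic ``number of Y's not immediately preceded by an X,'' and you yourself concede that a naive peeling is likely to yield growing order or coefficient degrees, which would break the interlacing induction. (Your justification for pessimism is also off: the non-D-finiteness result cited in the paper concerns the generating function of all $312$-avoiding valid hook configurations, not the $\Duck_{k,i}$ themselves; indeed the row sums $f_k(1)=|\Dyck^3_k|$ are the 3D-Catalan numbers, which are hypergeometric.) For the P\'olya-frequency route, the Aissen--Schoenberg--Whitney reduction is correctly stated, but total nonnegativity of the Toeplitz matrix of $(\Duck_{k,i})_{i}$ is an assertion at least as strong as real-rootedness itself, and you give no nonintersecting-path model realizing its minors --- you explicitly flag this as ``the crux.''

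So the gap is not a fixable detail but the entire proof: every step you actually carry out (Hermite--Biehler, ASW, Lindstr\"om--Gessel--Viennot) is standard machinery whose hypotheses you have not verified for this family, and those hypotheses are where the difficulty lives. A correct argument would need either an explicit recurrence for $\sum_i \Duck_{k,i}x^i$ with the sign structure demanded by the linear-combination lemma, or an explicit path system certifying total nonnegativity of the coefficient sequence. Absent one of these, the statement remains open, exactly as the paper records.
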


For a weaker but still interesting result, one could show that the coefficients of one or both of these polynomials are unimodal or log-concave.

\begin{definition}
Let $\mathcal{W}_n$ be the set of closed walks consisting of $2n$ unit steps north, east, south, or west starting and ending at the origin and confined to the first octant.
\end{definition}

\begin{Problem}
Find a direct bijection between the set of $(k,k-1)$-Duck words and $\mathcal{W}_n$.
\end{Problem}

This would provide a simpler and more direct proof of \cref{observation:Defant} than the one given by Corollary 5.1 of \cite{catalan}.

\begin{question}
What other combinatorial objects are in bijection with 312-avoiding reduced valid hook configurations? What other combinatorial objects are in bijection with Duck words?
\end{question}

\begin{Problem}
Find a formula for $\Duck_{k,i}$.
\end{Problem}

This would in turn yeild a formula for 312-avoiding reduced valid hook configurations as well as the total number of 312-avoiding valid hook configurations, whose generating function Sankar has proved to not be D-finite \cite{maya}.

\section{Acknowledgments}

This research was conducted at the REU at the University of Minnesota Duluth, which is supported by
NSF/DMS grant 1949884 and NSA grant H98230-20-1-0009. I want to thank Joe Gallian for running the REU, Yelena Mandelshtam and Colin Defant for proofreading, and Colin Defant, Amanda Burcroff, and Yelena Mandelshtam for advising the program and giving guidance on my problem. I would also like to thank Maya Sankar for posing the problem and helping with code to count valid hook configurations.

\end{document}